\documentclass[11pt]{article} 

\usepackage{glaudo_en_pkg}
\usepackage[latin1]{inputenc}
\usepackage[english]{babel}
\usepackage{amsmath}
\usepackage{amsthm}
\usepackage{stmaryrd}
\usepackage{amssymb}

\usepackage{amsfonts}
\usepackage[OT2,OT1]{fontenc}

\usepackage{leftidx}
\usepackage{mathtools}
\usepackage{tikz}
\usetikzlibrary{matrix,arrows,decorations.pathmorphing}
\usetikzlibrary{quotes}
\usepackage{tikz-cd}

\newcommand{\N}{\ensuremath{\mathbb N}} 
\newcommand{\Z}{\ensuremath{\mathbb Z}} 
\newcommand{\Q}{\ensuremath{\mathbb Q}} 
\newcommand{\F}{\ensuremath{\mathbb F}} 

\let\to\relax

\newcommand{\to}[1][]{\if\relax\detokenize{#1}\relax\rightarrow\else\xrightarrow{#1}\fi}



\newcommand{\defeq}{\ensuremath{\coloneqq}}

\newcommand{\qz}{\ensuremath{\mathbb{Q}/\mathbb{Z}}}



\DeclareFontFamily{U}{wncy}{}
\DeclareFontShape{U}{wncy}{m}{n}{<->wncyr10}{}
\DeclareSymbolFont{mcy}{U}{wncy}{m}{n}
\DeclareMathSymbol{\Sha}{\mathord}{mcy}{"58}

\renewcommand{\theta}{\vartheta}

\DeclareMathOperator{\Hom}{Hom}

\DeclareMathOperator{\im}{Im}
\DeclareMathOperator{\Gal}{Gal}

\DeclareMathOperator{\Spec}{Spec}
\DeclareMathOperator{\Pic}{Pic}
\DeclareMathOperator{\tr}{tr}

\DeclareMathOperator{\Br}{Br}

\DeclareMathOperator{\Ker}{Ker}
\DeclareMathOperator{\Coker}{Coker}
\DeclareMathOperator{\Gr}{Gr}

\DeclareMathOperator{\res}{res}
\DeclareMathOperator{\inv}{inv}

\newcommand{\s}{\setminus}

\newcommand{\cF}{\mathcal{F}}

\newcommand{\A}{\mathbb{A}}
\newcommand{\G}{\mathbb{G}}

\renewcommand{\P}{\mathbb{P}}

\newcommand{\ok}{\overline{k}}
\newcommand{\oK}{\overline{K}}
\newcommand{\oF}{\overline{F}}

\newcommand{\sm}{\text{sm}}

\newcommand{\et}{\acute{e}t}

\newcommand{\tpsi}{\psi}

\newtheorem*{Thm:lifting}{Theorem \ref{Thm:lifting}}
\newtheorem*{Cor:ObstructionIntro}{Corollary \ref{Cor:ObstructionIntro}}

\title{Ramified descent and transcendental Brauer--Manin obstruction}

\author{Julian Lawrence Demeio}


\begin{document}
	
	\maketitle

\begin{abstract}
	We investigate the following problem. Given a smooth geometrically connected variety $X$ defined over a number field $K$, and an \'etale torsor $V \to U$ over a Zariski-open $U$ of $X$, which adelic points of $X$ can be approximated by adelic points that lift to a (twist of a) $V$? The question has long been investigated in the literature when $U=X$, but less so in the general case. We introduce a Brauer--Manin obstruction to the problem, and provide an example where this obstruction is non-trivial and purely transcendental. This answers in the negative a question posed by Harari at a 2019 workshop. Our example is also an explicit example of a  non-trivial transcendental Brauer--Manin obstruction on a smooth compactification of a quotient $SL_n/G$, with $G$ constant metabelian.
\end{abstract}

\section{Introduction}

Descent theory has long been used to understand how rational points $X(K)$ of a smooth complete variety $X$ defined over a number field $K$ are distributed in the adelic points $X(\A_K)$. It was first developed for proper varieties by Colliot-Th\'el\`ene and Sansuc \cite{CTS87} \cite{Skorobogatov}, and it was later extended to open varieties by Harari and Skorobogatov \cite[Chapter 6]{opendescent}. We investigate the matter of ``ramified descent'', i.e.\ the behaviour of open descent theory under compactification, and answer a question of Harari on this topic.

For a torsor $\lambda:V \to U$ under a group of multiplicative type $M/K$, the {\bf descent set} is defined to be the set of those adelic points of $U$ that lift to adelic points of a $K$-twist of $V$:
\[
U(\A_K)^{\lambda}=\bigcup_{\sigma \in H^1(K,M)} \lambda_{\sigma}(V_{\sigma}(\A_K)).
\]
It follows from open descent theory \cite[Proposition 3.1]{opendescent}  that this set may be described in terms of a(n algebraic) Brauer--Manin obstruction: i.e.\ there exists a subgroup $\Br_{\lambda}U\subseteq \Br_1 U$ such that 
\begin{equation}\label{Eq:HS}
    U(\A_K)^{\lambda}=U(\A_K)^{\Br_{\lambda}U}.
\end{equation}
Let now $X$ be a smooth compactification of $U$, and $X(\A_K)^{\lambda}$ be the adelic closure of $U(\A_K)^{\lambda}$ in $X(\A_K)$. The first result of this paper is that $X(\A_K)^{\lambda}$ provides an obstruction to the Hasse Principle and weak approximation for $X$:
\begin{theorem}
    The inclusion $\overline{X(K)} \subseteq X(\A_K)^{\lambda}$ holds.
\end{theorem}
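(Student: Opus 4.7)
Since $X(\A_K)^{\psi}$ is closed by definition, the inclusion reduces to showing $X(K) \subseteq X(\A_K)^{\psi}$. Fix $x \in X(K)$ and a basic adelic neighborhood $U = \prod_{v \in S} U_v \times \prod_{v \notin S} X(\mathcal{O}_v)$ of $x$, with $S$ a finite set of places; I must produce $\xi \in H^1(K, G)$ and $y \in Y^{\reg}_\xi(\A_K)$ such that $\psi_\xi(y) \in U$.

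The plan is to reduce everything to the \emph{unramified case}, namely when $\tilde x \in X^{\et}(K)$, where $X^{\et}$ denotes the étale locus of $\psi$. In that case, the fibre $\psi^{-1}(\tilde x) \subseteq Y^{\et} \subseteq Y^{\reg}$ is a $G$-torsor over $K$ whose class $\tilde\xi \in H^1(K, G)$ is characterised by the fact that the twist $\psi_{\tilde\xi}: Y_{\tilde\xi} \to X$ carries a $K$-rational point $\tilde y \in Y^{\reg}_{\tilde\xi}(K)$ above $\tilde x$; the constant adelic point at $\tilde y$ then furnishes the required lift, with image $\tilde x \in U$.

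For general $x$, the remaining task is to produce such a $\tilde x \in X^{\et}(K) \cap U$, which I would do via a rational curve through $x$. Using smoothness of $X$ at $x$, pick a regular system of parameters $(t_1, \ldots, t_n) \in \mathfrak{m}_{X,x}$ with $(\d t_1)|_x$ not contained in the tangent space at $x$ of any component of the branch locus $B$ of $\psi$; these define an étale morphism $\phi: W \to \A^n_K$ from a Zariski open $W \ni x$ with $\phi(x) = 0$, which by a local Noether normalization and shrinking of $W$ can be arranged to be finite étale onto an open $U^* \subseteq \A^n_K$ containing the $t_1$-axis $L \cong \A^1_K$ (the line can be taken inside $U^*$ by choosing its direction generically, since $\A^n \setminus U^*$ has proper intersection with a generic line). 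Then $C := L \times_{\A^n} W = \phi^{-1}(L) \to L$ is finite étale, and the connected component $C^0 \ni x$ is a connected finite étale cover of $\A^1_K$ containing a $K$-point. The triviality of $\pi_1^{\et}(\A^1_{\oK})$ in characteristic zero, combined with the existence of this $K$-point, forces $C^0 \cong \A^1_K$; this yields a morphism $\sigma: \A^1_K \to X$ with $\sigma(0) = x$ and $\sigma^{-1}(B) \subseteq \A^1_K$ a finite subscheme (by the transversality arrangement, $\sigma$ is not entirely contained in $B$). Weak approximation on $\A^1_K$ (density of $K$ in $\A_K$), together with continuity of $\sigma$ at every place and an integral model of $\sigma$ over $\mathcal{O}_v$ for $v \notin S$, provides $t \in K \setminus \sigma^{-1}(B)$ with $\tilde x := \sigma(t) \in X^{\et}(K) \cap U$. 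The unramified case applied to $\tilde x$ then closes the argument.

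The principal technical obstacle is the construction of $\sigma$: arranging the étale chart $\phi$ to be finite étale onto an open containing a full line $L \cong \A^1_K$ through $0$, so that $C^0$ is genuinely a finite étale cover of $\A^1_K$ (rather than of a proper open subset, for which $\pi_1^{\et}$ is nontrivial). Once this geometric setup is in place, the vanishing of $\pi_1^{\et}(\A^1_{\oK})$ in characteristic zero and the $K$-rationality of $x \in C^0$ together force $C^0 \cong \A^1_K$, and the remainder of the proof is routine descent theory combined with weak approximation on the affine line.
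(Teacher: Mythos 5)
Your reduction to the unramified case is where the proof breaks down, and the failure is not merely technical. You propose to find, for every $x \in X(K)$ and every adelic neighbourhood of $x$, a point $\tilde x \in X^{\et}(K)$ in that neighbourhood; this amounts to proving $X(K) \subseteq \overline{U(K)}$ (closure in $X(\A_K)$), which is false in general. For instance, if $X$ is an elliptic curve with $X(K)$ finite and the branch locus of $\psi$ containing all of $X(K)$, then $U(K) = \emptyset$ while $X(K) \neq \emptyset$; the paper points out exactly this phenomenon at the start of Section \ref{Sec:ClosureofXK}. Your mechanism for producing $\tilde x$ --- a rational curve $\sigma:\A^1_K \to X$ through $x$ --- cannot exist when $X$ contains no rational curves (abelian varieties, curves of genus $\geq 1$, many surfaces of general type), and the intermediate geometric claims are also unjustified: an étale chart $W \to \A^n_K$ cannot in general be shrunk to a \emph{finite} étale map onto an open subset of $\A^n_K$, and an open $U^* \subseteq \A^n_K$ whose complement is a divisor need not contain any line through the origin (take the complement to be $n+1$ hyperplanes in general position avoiding the origin: every line through $0$ meets one of them). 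So the ``principal technical obstacle'' you flag is genuinely insurmountable, not a routine verification.

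The idea you are missing is that a branch-locus point $x \in X(K)$ must be lifted \emph{itself} to a $K$-point of a twist of (a desingularization of) $Y$, rather than approximated by rational points of the étale locus. This is what the paper proves (Proposition \ref{ClosureofXK}): it reduces to the case of curves by choosing an honest curve $C \subseteq X$ through $x$, smooth at $x$ and meeting $U$ --- such a curve always exists, unlike a rational one --- and then, for a branch point on a curve, uses that the absolute Galois group of the complete local field $\widehat{K(C)}_x \cong K((t))$ is a semidirect product of its inertia subgroup with $\Gal(\widehat{K(C)}_x^{nr}/\widehat{K(C)}_x) \cong \Gamma_K$. The resulting section $\Gamma_K \to \Gamma_G$ produces a cocycle $\xi \in Z^1(K,G)$ for which the fiber of $\psi_\xi$ over $x$ acquires a $K$-point. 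Your treatment of the case $x \in U(K)$ is correct, but that is the easy and well-known part of the statement.
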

The proof we present, quite compact, is due to Olivier Wittenberg, whom the author thanks profoundly. (The original proof that the author had in mind was much more involved.) 

We are mainly interested in this paper in the case where $M$ is finite, in which case we call $X(\A_K)^{\lambda}$ the ``ramified descent set'' (the adjective ``ramified'' indicates that the relative normalization of $U$ in $V$ is allowed to be ramified). (See also Section \ref{Sec:Setting} for an alternative definition of the ramified descent set.)

In a 2019 workshop, Harari formulated a question to investigate how \eqref{Eq:HS} behaves under ``compactification''. We present his question in Section \ref{SSec:Reformulation} (Question \ref{QHarari}), but a special, yet exemplative, case may be reformulated as follows (see Proposition \ref{Prop:reformulation}):
\begin{question}[Harari]\label{Q11}
    Assume that $\Br X/\Br_0 X$ and $M$ are finite. Does the identity $X(\A_K)^{\lambda}= X(\A_K)^{\Br_{\lambda}U \cap \Br X}$ hold?
\end{question}
Note that the  inclusion $X(\A_K)^{\lambda}\subseteq  X(\A_K)^{\Br_{\lambda}U \cap \Br X}$ follows from \eqref{Eq:HS}. 
Harari's question was motivated by the fact that the analog of Question \ref{Q11} has a positive answer when $M$ is a torus (see \cite[Proposition 3.1]{BMS14}). Moreover, when $M=\mu_n$ is cyclic, and some mild ramification assumptions are satisfied, then a positive answer follows from a result of Colliot-Th\'el\`ene and Skorobogatov \cite[Theorem 14.2.25]{BGbook} (see Appendix \ref{Sec:Other} for more details).

We answer Question \ref{Q11} negatively. In order to so, we introduce in Section \ref{Sec:Obstruction} a new Brauer subgroup $\Br_{\lambda}^{ram}X \subseteq \Br X$, defined as the intersection $\Br_{\lambda}^{ram}U \cap \Br X$, where $\Br_{\lambda}^{ram}U$ is the image of the composition 
\[
H^2(\Gamma_M, \oK^*) \to H^2(\Gamma_M, \oK[V]^*) \to[\check{C}_{\mathfrak l}] H^2(U,\G_m)=\Br U,
\]
where $\check{C}_{\mathfrak l}$ is the \v{C}ech-to-\'etale map associated to the $V_{\oK} \to U$, which is a profinite torsor under the constant profinite group $\Gamma_M  \coloneqq M(\oK) \rtimes \Gamma_K$ (see Section \ref{Sec:Obstruction} for more details). In the special case where both $\Pic V$ and $\oK[V]^*/\oK^*$ vanish, one has $\Br_{\lambda}^{ram}X=\Ker(\Br X \to \Br V_{\oK})$, see Remark \ref{Rmk:Piceq0}. (In Example \ref{ExTutto0} we provide some examples where this vanishing happens.)
One verifies that: 
\begin{proposition}\label{Prop2}
    The inclusion $X(\A_K)^{\lambda} \subseteq X(\A_K)^{\Br_{\lambda}^{ram}X}$ holds.
\end{proposition}
The group  $\Br_{\lambda}^{ram}X$ contains $\Br_{\lambda}U \cap \Br X$, see Proposition \ref{Prop:InclusionBraBr}. However, the former may in general be a bigger group and provide a bigger Brauer--Manin obstruction. We provide some explicit families where this is indeed the case in Section \ref{Sec:GBsp}, where we prove:
\begin{proposition}\label{Prop:gbsp}
    Let $K$ be a number field and $e$ be a natural number such that $\mu_e \subset K^*$. If $H$ is a constant metabelian finite subgroup of $SL_n$ of exponent $e$, $H^{ab}$ is its abelianization, and $\lambda$ is the $H^{ab}$-torsor $SL_n/[H,H] \to SL_n/H$, then $\Br_{\lambda}^{ram}X=\Br X$.
\end{proposition}

\begin{theorem}\label{Thm:gbsp}
    For every number field $K$ and every prime $p \geq 5$ such that $\mu_p \subset K^*$, there exists a constant nilpotent metabelian finite group $H$ of exponent $p$ such that, for any embedding $H \hookrightarrow SL_{n,K}$, letting $X$ be a smooth compactification of $SL_{n,K}/H$, we have $\Br_a X= 0$ and
    \[
    X(\A_K)^{\Br X} \neq X(\A_K).
    \]
\end{theorem}

Here $\Br_aX$ denotes the algebraic Brauer group of $X$ modulo constants. 
Combining Proposition \ref{Prop:gbsp}, Theorem \ref{Thm:gbsp},  and Proposition \ref{Prop2} we obtain the sought negative answer to Harari's question: indeed for $X$ as in the theorem, we have that $\Br_{\lambda}U \cap \Br X \subseteq \Br_1X$ is constant as $\Br_aX=0$, while $X(\A_K)^\lambda \subseteq X(\A_K)^{\Br_{\lambda}^{ram} X} =X(\A_K)^{\Br X} \neq X(\A_K)$. 

Incidentally, Theorem \ref{Thm:gbsp} appears to be only the second known example of transcendental obstruction to weak approximation for quotients $SL_n/H$, or, in other words (see \cite[Sec.\  1.2]{Harari2007}) to the Grunwald Problem for a finite group $H$. The first such example was obtained by Demarche, Lucchini and Neftin in \cite[Theorem 1.2]{DLN17}. However, unlike in {\em loc.cit.}, where the existence of a transcendental obstruction is proven non-constructively, we show the non-triviality of the transcendental Brauer--Manin pairing by computing it explicitly. 

\paragraph{Structure of the paper}

In Section \ref{Sec:Notation} we settle our notation. In Section \ref{Sec:Setting} we prove some basic facts about the ``descent set'' 
 $X(\A_K)^{\lambda}$ for a finite $M$. Since it requires no further effort, we replace $M$ here with a general finite group scheme $G/K$ (not necessarily commutative). In the same section, we prove that $X(\A_K)^{\lambda}$ provides an obstruction to Hasse Principle and weak approximation on the whole $X$ (see Corollary \ref{Cor:ObstructionIntro}). 
 In Section \ref{SSec:Reformulation}, we formulate Harari's question precisely and show its relation with Question \ref{Q11}.
 In Section \ref{Sec:Obstruction}, we introduce the Brauer subgroup $\Br_{\lambda}^{ram}X$, prove that it contains $\Br_{\lambda}U \cap \Br X$ and that it obstructs $X(\A_K)^{\lambda}$. 
 In Section \ref{Sec6}, we prove Proposition \ref{Prop:gbsp} and Theorem \ref{Thm:gbsp}. We do so by explicitly computing the unramified Brauer--Manin pairing on $SL_n/H$ for $H$ nilpotent metabelian of odd prime exponent. This explicit computation extends earlier work of Bogomolov \cite[Sec.\ 5]{BogomolovMumbai}. 
 Appendix \ref{Sec:Elementary} contains some elementary lemmas that are used in Section \ref{Sec:GBsp}. Appendix \ref{Sec:Other} talks briefly about other already existing works containing the idea of ``ramified descent''.

\section{Notation}\label{Sec:Notation}

\paragraph{Fields}

Unless specified otherwise, $k$ will always denote a field of characteristic $0$ and $K$ a number field.

$M_K$ (resp.~$M_K^f, M_K^{\infty}$) denotes the set of (non-archimedean, archimedean) places of $K$.

For a place $v \in M_K$ (resp.~$v \in M_K^f$), $K_v$ (resp.~$O_v$) denotes the $v$-adic completion of $K$
(resp.~the $v$-adic integers).

$\A_K$ (resp.~$\A_K^S$, for a subset $S \subset M_K$) denotes the topological ring of adeles of $K$ (resp.~$S$-adeles), i.e. the topological ring $\prod'_{v \in M_K} K_v$ (resp.~$\prod'_{v \in M_K \setminus S} K_v$), the restricted product being on $O_v \subseteq K_v$.

For a finite subset $S \subseteq M_K$, $K_S$ denotes the product $\prod_{v \in S} K_v$. We let $K_{\Omega}$ denote the product $\prod_{v \in M_K} K_v$.

For a Galois extension $L/K$, $\Gal(L/K)$ denotes the Galois group of the extension. For a field $k$ with algebraic closure $\overline{k}$, $\Gamma_k\defeq \Gal(\overline{k}/k)$.

\paragraph{Duals}

For a group $M$ of multiplicative type over a field $k$ (i.e. a commutative group scheme which is an extension of a finite group scheme by a torus), $M' \defeq Hom(M, \G_{m})$ denotes its Cartier dual.

For a torsion abelian group $A$, $A^D$ denotes the profinite abelian group $\Hom(A,\Q/\Z)$ endowed with the compact-open topology. If $A$ is a profinite abelian group, $A^D$ denotes the torsion group $\Hom_{cont}(A,\Q/\Z)$, where $\Q/\Z$ is endowed with its discrete topology. By Pontryagin duality, if $A$ is torsion or profinite, there is a canonical isomorphism $A \cong(A^D)^D$. 

\paragraph{Geometry}

All schemes appearing in this paper are separated. We always tacitly assume this throughout the paper.

A \emph{variety} $X$ over a field $k$ is an integral scheme of finite type over a field $k$.

For a $k$-scheme $X$, we denote the residue field of a point $\xi \in X$ by $k(\xi)$. We denote the base change $X_{\ok}$ by $\overline{X}$.



\paragraph{Groups and torsors}

Group actions are assumed to be right actions unless specified otherwise. 

Let $S$ be a scheme, $G$ be a group scheme over $S$ and $X$ be an $S$-scheme. A right $G$-\emph{torsor} over $X$ is an $X$-scheme $Y \to X$, endowed with a $G$-action $m:Y \times_S G \to Y$ that is $X$-equivariant (i.e. such that the composition $Y \times_S G \to[m] Y \to X$ is equal to the composition $Y \times_S G \to[pr_1] Y \to X$) and such that there exists an \'etale covering $X' \to X$ and an $X'$-isomorphism $X' \times_X Y  \cong X' \times_X G$ that is $G$-equivariant. 

For an abstract group $N$, and a scheme $S$ (resp.\ a field $F$), we denote by $N_S$ (resp.\ $N_F$) the $S$-scheme (resp.\ $F$-scheme) $\sqcup_{n \in N} S$, endowed 
with its natural $S$(resp.\ $F$)-group scheme structure. 
If $X$ is an $S$-scheme, a torsor $Y \to X$ under an abstract group $G$ is a torsor under the constant group $G_S$.

If $G/k$ is an algebraic group, and $k \subseteq F$ is a field extension, we use the notation $H^i(F,G)$ (with $i \in \N$ and $i=0,1$ if $G$ is not commutative) to denote the cohomology group/set $H^i(\Gamma_F,G(\overline{F}))=Z^i(\Gamma_F,G(\overline{F}))/B^i(\Gamma_F,G(\overline{F}))$ (where $B^i(\Gamma_F,G(\overline{F}))$ is a subgroup when $G$ is commutative and is just an equivalence relation otherwise). 

If $G$ is not commutative the set of cocycles $Z^1(\Gamma_{F},G(\overline{F}))$ is the one of non-abelian ($1$-)cocycles, i.e. those functions $g_{\sigma}:\Gamma_F \to G(\overline{F})$ that satisfy 
$g_{\sigma \tau} = g_{\sigma}\leftidx{^\sigma}g_{\tau} $. The set $H^1(\Gamma_F,G(\overline{F}))$ is the  quotient of $1$-cocycles by the equivalence relation $B^1(\Gamma_F,G(\overline{F})): g_{\sigma} \sim  g'_{\sigma}$ if there exists $g \in G(\overline{F})$ such that $g'_{\sigma}=g^{-1}g_{\sigma}\leftidx{^\sigma}g$.
Note that these cocycles correspond to (left) $G$-torsors through the standard correspondence \cite[p.18, 2.10]{Skorobogatov}. 

If $\xi \in Z^1(K,G)$, we use the notation $G^{\xi}$ to denote the inner twist of $G$ by $\xi$, and $G_{\xi}$ to denote the left principal homogeneous space of $G$ obtained by twisting $G$ by the cocycle $\xi$. This twist is naturally endowed with a right action of $G^{\xi}$. See \cite[p. 12-13]{Skorobogatov} for more details on these constructions.

If $X$ is a quasi-projective $k$-scheme endowed with a $G$-action, and $\xi \in Z^1(k,G)$, we use the notation $X_{\xi}$ to denote the twisted quasi-projective $k$-scheme $(X \times_k^G G_{\xi})$. (We refer the reader to \cite[p. 20]{Skorobogatov}, \cite[Sec.\ I.5.3]{CohGaloisienne} and \cite[Sec.\ III.1.3]{CohGaloisienne} for the existence of the twist and immediate properties of the twisting operation). The $k$-scheme $X_{\xi}$  is naturally endowed with a $G^{\xi}$-action.
We recall that there always exists  a $G \times_k \bar{k}$-equivariant isomorphism $X_{\xi} \times_k \ok \cong X \times_k \ok$. If $X'$ is another $k$-scheme and $\psi:X \to X'$ is a $G$-invariant morphism (i.e.\ $G$-equivariant when we endow $X'$ with the trivial action), we denote by $\psi_{\xi}:X_{\xi} \to X'$ the twisted form of $\psi$ by $\xi$.

If $X$ is endowed with a left $G$-action we may still do the twisting operations, by taking the corresponding right action, using the canonical isomorphism $G \cong G^{op}, g \mapsto g^{-1}$.

\paragraph{Equivariant commutative diagrams}

Let $S$ be a scheme. For $S$-group schemes $G_1, G_2$, a (usually implicit) homomorphism $G_1 \to G_2$, and torsors $Z_1\to[G_1] W_1, Z_2\to[G_2] W_2$, a diagram
\begin{equation}\label{EqDCommaction}
\begin{tikzcd}
Z_1 \arrow[d, "G_1"] \arrow[r] & Z_2 \arrow[d, "G_2"] \\
W_1 \arrow[r]                  & W_2                 
\end{tikzcd}
\end{equation}
{\em commutes} if the underlying diagram is commutative and $Z_1 \to Z_2$ is $(G_1\to G_2)$-equivariant. 


\paragraph{Category of torsors} Let $S$ be a scheme, and $X$ an $S$-scheme. The category of torsors over $X$ with base-scheme $S$ is the category whose objects are pairs $(Y,G)$, where $G$ is an $S$-group scheme, and $Y$ is a $G$-torsor over $X$, and whose morphisms are pairs $(Y_1 \to Y_2,G_1 \to G_2)$, where $G_1 \to G_2$ is a homomorphism, and $Y_1 \to Y_2$ is a $(G_1\to G_2)$-equivariant morphism. 

\paragraph{Profinite (\'etale) torsors under constant profinite groups} 
Let $S$ be a scheme, $X$ an $S$-scheme, and $G$ be a(n abstract) profinite group. A {\em profinite torsor} over $X$ under $G$ is an $X$-scheme $Y \to X$, endowed with a $G$-action $m:Y \times G \to Y$ that is $X$-equivariant, and such that there exists an inverse system $(Y_i,G_i), i \in I$ in the category of torsors over $X$ with each $G_i$ finite, and such that there exist an isomorphism $\psi: G \to[\sim] \lim\limits_{\leftarrow} G_i$ and a $\psi$-equivariant isomorphism $Y \cong \lim\limits_{\leftarrow} Y_i$. (Note that the inverse limit of the $Y_i$ exists in the category of schemes, as each $Y_i$ is the relative spectrum of a finite \'etale $\mathcal O_X$-algebra $\mathcal O_{Y_i}$, and their inverse limit may be realized as the relative spectrum of the finite \'etale ind-algebra $\lim\limits_{\to}\mathcal O_{Y_i}$ \cite[Tag 01YV]{stacks-project}.)

When in addition $Y$ is connected, we say that $Y \to X$ is a {\em profinite \'etale Galois cover}, see \cite[Remark 2.21(b)]{LECcompleto}. See also \cite[Sec.\ 2]{vwprofinite} for more details and an alternative definition. 

\paragraph{Brauer group}

Recall that the Brauer group of a scheme $X$ is defined to be the \'etale cohomology group $H^2_{\et}(X,\G_m)$. When $X$ is a variety defined over a number field $K$, this provides an obstruction, known as \emph{Brauer-Manin obstruction}, to local-global principles, in the following sense. There is a {\em Brauer-Manin }pairing:
\[
X(\A_K) \times \Br X \to \Q/\Z,
\]
sending $((P_v)_{v \in M_K},B)$ to $((P_v)_{v \in M_K},B)_{BM}:=\sum_v \inv_v B(P_v)$, where $\inv_v:H^2(\Gamma_{K_v}, \overline{K_v}^*) \to \Q/\Z$ is the usual invariant map (see e.g. \cite[Thm 8.9]{HarariBook} for a definition). Whenever $B \in  \im \Br K$ or $(P_v)_{v \in M_K} \in X(K)$ (diagonally embedded in $X(\A_K)$), $((P_v)_{v \in M_K},B)_{BM}=0$ by the Albert-Brauer-Hasse-Noether theorem (see  \cite[Sec.\ 5]{Skorobogatov}). It follows that $X(K)$ is a subset of
\[
X(\A_K)^{\Br X} := \{(P_v)_{v \in M_K} \in X(\A_K) \mid ((P_v)_{v \in M_K},B)_{BM}=0 \text{ for all } B \in \Br X \}.
\]

For a geometrically integral scheme $X$ over a field $F$, we adopt the usual notation $\Br_1 X:=\Ker (\Br X \to \Br X_{\oF})$ and $\Br_0X := \im (p^*: \Br F \to \Br X)$, where $p:X \to \Spec F$ denotes the structural morphism.

When $X$ is smooth and integral over $F$, and $U \subseteq X$ is an open subscheme, we identify, with a slight abuse of notation, the injective \cite[Thm.\  3.5.5]{BGbook} pullback $\Br X \to \Br U$ with an inclusion $\Br X \subseteq \Br U$ ($\subseteq \Br k(X)$). We say that $\beta$ is \emph{unramified} if $\beta \in \Br X^c$, for one (or, equivalently, all  \cite[Prop.\ 3.7.10]{BGbook}) smooth compactification(s) $X^c$ of $X$. We denote the subgroup of unramified elements by $\Br_{ur} (k(X))$ or $\Br_{ur}X$.

\paragraph{Cohomology}

For a scheme $X$ and an \'etale abelian sheaf $\cF$ on $X$, $H^n(X,\cF), n \geq 0$ denotes the \'etale cohomology group $H^n_{\et}(X,\cF)$.

\paragraph{Map from \v{C}ech cohomology to \'etale cohomology}

Let $U$ be a scheme, $\cF$ an \'etale sheaf on $U$, and $\phi: V \to U$ an \'etale cover. We may think of $\phi$ as an \'etale covering $\mathcal{U}$ of $U$ made of a single cover: $\mathcal{U}=\{V \to U\}$.  Recall that to such a covering we may naturally associate its \v{C}ech cohomology groups $\check{H}^n(V/U,\cF), n \geq 0$ \cite[Sec.\ III.2]{LECcompleto}. There are natural \v{C}ech-to-\'etale morphisms:
\begin{equation}\label{Cechtoet}
    \check{C}_{\phi} :\check{H}^n(V/U,\cF) \to H^n(U,\cF)
\end{equation}
for each $n \geq 0$. (As edge maps of the first spectral sequence in \cite[Prop.\ III.2.7]{LECcompleto}).

When $\phi:V \to U$ is an \'etale torsor under the constant finite group $G$, there are natural identifications  \cite[Ex.\ III.2.6]{LECcompleto} (technically {\em loc.cit.} is formulated for Galois covers, equiv.\ for {\em connected} torsors under constant finite groups, but the connectedness assumption is never used):
\begin{equation}\label{Grouptocech}
    \check{H}^n(V/U,\cF)=H^n(G,\cF(V)),
\end{equation}
where the latter denotes group cohomology. Under these identifications, \eqref{Cechtoet} becomes:
\begin{equation}\label{Cechtoet2}
    \check{C}_{\phi} :H^n(G,\cF(V)) \to H^n(U,\cF).
\end{equation}
If $\phi:V \to U$ is a torsor under a profinite constant group $G$, then one may define natural maps $\check{C}_{\phi} :H^n(G,\cF(V)) \to H^n(U,\cF), n \geq 0,$
as the colimit of \eqref{Cechtoet2} on all quotients $V/H \to U$ by open subgroups $H$ of $G$. See \cite[Rmk.\ III.2.21(b)]{LECcompleto} for more details. 

\begin{remark}\label{Rmk:trivialtorsorCech}
    When $Y=X \times G$ (i.e.\ $\phi$ is the trivial torsor) with $G$ finite, then the trivial covering $\mathcal{U}'=\{U \to U\}$ refines $\mathcal{U}=\{V \to U\}$ via the morphism $U= U \times \{e\} \hookrightarrow U \times G$. Thus $\check{C}_{\phi}:H^n(G,\cF(V)) \to H^n(U,\cF)$ factors  through $H^n(U/U,\cF(U))$. The latter is $0$ for $n\geq 1$, and thus $\check{C}_{\phi}=0$ for $n\geq 1$.
\end{remark}

\section{Descent set}\label{Sec:Setting}

\subsection{Definition}

\paragraph{Descent set for torsors}

Let $K$ be a number field, $G/K$ a finite group scheme, $p:U \to \Spec K$ a smooth geometrically connected variety over $K$, and $\lambda:V \to U$ a $G$-torsor. 


For every $\xi \in H^1(K,G)$, there exists a twisted form $\lambda_{\xi}:V_{\xi} \to U$ of the torsor $\lambda$. This is a torsor under the twisted form $G^{\xi}$ of $G$. The class $[\lambda_{\xi}] \in H^1(U,G^{\xi})$ is given by the image of $[\lambda] \in H^1(U,G)$ under the well-known isomorphism  \cite[p.20, 21]{Skorobogatov}
$$
H^1(U,G) \to  H^1(U,G^{\xi}), [V] \mapsto [V_{\xi}].
$$ 
When $G$ is commutative, we have that $G^{\xi} = G$, and the morphism $H^1(U,G) \to  H^1(U,G), [V] \mapsto [V_{\xi}]$ becomes $[V] \mapsto [V] - p^*[\xi]$.

Recall that the descent set $U(\A_K)^{\lambda}$ associated to $\lambda$ is defined as:
\begin{equation}\label{Eq:Obstr_integral}
{U(\A_K)^{\lambda}}\defeq {\bigcup_{\xi \in H ^1(K,G)}\lambda_{\xi}(V_{\xi}(\A_K))} \subseteq U(\A_K),
\end{equation}

This is adelically closed in $U(\A_K)$ \cite[Prop. 6.4]{demarche} and contains ${U(K)}$ \cite[Sec.\ 5.3]{Skorobogatov}.

\paragraph{Compactifying the descent set}

Let $X$ be a smooth compactification of $U$. Recall from the introduction:

\begin{definition}
    The {\em ramified descent set} for $\lambda$ is:
    \[
    X(\A_K)^{\lambda}\coloneqq \overline{\bigcup_{\xi \in H^1(K,G)}\lambda_{\xi}(V_{\xi}(\A_K))}.
    \]
\end{definition}

The closure denotes the adelic closure in $X(\A_K)$. 
Let $\psi:Y \to X$ be the relative normalization of $X$ in $V$. By the universal property of the relative normalization, the $G$-action on $V$ extends to a $G$-action on $Y$. Let $\nu:Y^{\sm} \to Y$ be a $G$-equivariant desingularization of $Y$ \cite{strong_resolution}, and let $\psi^{\sm}$ be the composition $\tpsi\circ\nu:Y^{sm} \to X$. The following lemma provides alternative descriptions of the ramified descent set.

\begin{lemma}\label{Lem:equivdefs}
    We denote by $\psi_{\xi}$ (resp.\ $\psi^{\sm}_{\xi}$) the twisted forms of $\psi$ (resp.\ $\psi^{\sm}$) by $\xi \in H^1(K,G)$. The following sets coincide:
	\begin{enumerate}
		\item $\overline{U(\A_K)^{\lambda}}$,
		\item $\overline{\bigcup_{\xi \in H ^1(K,G)}
			\psi^{\sm}_{\xi}(Y^{\sm}_{\xi}(\A_K))}$,
		\item $\overline{\bigcup_{\xi \in H^1(K,G)}\psi_{\xi}(Y^{reg}_{\xi}(\A_K))},$ where $Y^{reg}$ is the open subscheme of regular points of $Y$.
	\end{enumerate}
\end{lemma}

The closures denote adelic closures in $X(\A_K)$.

\begin{proof}
	We first prove that i and ii coincide. Note that $V' \defeq \nu^{-1}(V) \to[\nu] V$ is an isomorphism since $V$ is regular.
	We have that:
	\begin{equation}\label{Eqtant}
	\overline{\bigcup_{\xi}\lambda_{\xi}(V_{\xi}(\A_K))} =\overline{\bigcup_{\xi} \psi^{\sm}_{\xi}(V'_{\xi}(\A_K))} = \overline{\bigcup_{\xi} \overline{\psi^{\sm}_{\xi}(V'_{\xi}(\A_K))}}  =\overline{\bigcup_{\xi} \psi^{\sm}_{\xi}(\overline{V'_{\xi}(\A_K)})} = \overline{\bigcup_{\xi} \psi^{\sm}_{\xi}(Y^{\sm}_{\xi}(\A_K))},
	\end{equation}
	where the union is over ${\xi \in H ^1(K,G)}$ everywhere, and in the third term, $\overline{V'_{\xi}(\A_K)}$ denotes the closure in $Y^{\sm}_{\xi}(\A_K)$. The first two identities are immediate, the third follows from the properness of $\psi^{\sm}_{\xi}$, and the fourth holds because $V'_{\xi}(\A_K)$ is dense in $Y^{\sm}_{\xi}(\A_K)$ ($Y^{\sm}_{\xi}$ is smooth, so this follows from \cite[Thm.\  10.5.1]{BGbook}). This proves that i and ii coincide. They also coincide with iii, since this is contained between the left and right hand side of \eqref{Eqtant}.
\end{proof}

\begin{remark}\label{Rmk:unamb}
	\begin{itemize}
		\item Since iii is independent of the choice of $U$ and $Y^{\sm}$, the lemma shows that ii is as well, and i is too in the sense that $X(\A_K)^{\lambda}$ only depends on the generic fiber $\lambda|_{\Spec K(X)}:\Spec K(V) \to \Spec K(X)$, viewed as a $G$-torsor, of the torsor $\lambda$. 
		\item No conflict of notation on $X(\A_K)^{\lambda}$ arises with \eqref{Eq:Obstr_integral} when $U=X$, as in this case $U(\A_K)^{\lambda} = X(\A_K)^{\lambda}$ is closed in $X(\A_K)$ by \cite[Prop. 6.4]{demarche}.
	\end{itemize}
\end{remark}

\begin{warning*}
	Note that, as the continuous map $U(\A_K) \hookrightarrow X(\A_K)$ is not a topological immersion, the set $X(\A_K)^{\lambda} \cap U(\A_K)$ might in general very well be bigger than $U(\A_K)^{\lambda}$. The reader may verify that in the example given in Section \ref{Sec:GBsp} this is exactly the case.
\end{warning*}

\paragraph{Setting}

From now on we {fix}, until Section \ref{Sec:Obstruction} (included) a number field $K$, a finite group scheme $G/K$, a $G$-torsor $\lambda:V \to U$ over a geometrically integral smooth $K$-variety $p:U \to \Spec K$, and a smooth compactification $X$ of $U$.

\subsection{Obstruction to adelic density of rational points on $X$}\label{Sec:ClosureofXK}


Let, as above, $\psi:Y \to X$ be the relative normalization of $X$ in $V$, $\nu:Y^{\sm} \to Y$ be a $G$-equivariant desingularization of $Y$, and $r$ be the composition $\tpsi\circ\nu:Y^{sm} \to X$. 

\begin{theorem}\label{Thm:lifting}
    The inclusion $X(K) \subseteq \bigcup_{\xi \in H^1(K,G)} r_{\xi}(Y_{\xi}(K))$ holds.
\end{theorem}

Combining Theorem \ref{Thm:lifting} with Lemma \ref{Lem:equivdefs}.ii, we deduce:

\begin{corollary}\label{Cor:ObstructionIntro}
	The inclusion $\overline{X(K)} \subseteq X(\A_K)^{\lambda}$ holds.
\end{corollary}

The following proof of Theorem \ref{Thm:lifting} is due to Olivier Wittenberg, who kindly suggested a proof that is much simpler than the previous one the author had.

\begin{proof}[Proof of Theorem \ref{Thm:lifting} \emph{(}\emph{Olivier} \emph{Wittenberg}\emph{)}]
	
    Let $d = \dim X$, $P \in X(K)$ be a rational point, and $u_1,\ldots,u_d \in \mathcal{O}_{X,P}$ be a regular system of parameters at $P$. Let $C \subseteq X$ be the Zariski-closure of the curve $u_2=\cdots = u_{d}=0$. Since $K$ is infinite, after a linear change of coordinates of $u_1,\ldots,u_d$, we may assume that $C$ is not contained in $D \coloneqq X \s U$. 
	
	Note that $C$ is smooth at $P$. Choosing a local parameter $t$ for $C$ at $P$, we get a morphism $\Spec K [[t]] \to C$ that sends the special point $t=0$ to $P$. This morphism induces a morphism $\Spec K((t)) \to X$, whose set-theoretic image is the generic point of $C$. In particular, by construction of $C$, it belongs to $U$. Hence the $G$-torsor $V \to U$ gives a class in $H^1({K ((t))}, G)$, which  we may push to $H^1({K((t^{\frac{1}{\infty}}))}, G)$.
	
	The inclusion $K \subseteq K((t^{\frac{1}{\infty}}))$ induces an identification $\Gamma_{K ((t^{\frac{1}{\infty}}))} = \Gamma_K$ (this follows from the algebraic-closedness of $\overline{K}((t^{\frac{1}{\infty}}))$ \cite[Chapter IV, Prop.\  8]{LocalFields}), and hence an identification 
	$H^1({K((t^{\frac{1}{\infty}}))}, G) =H^1(K,G)$. 
	Hence, after replacing $Y$ with a $K$-twist, we may assume that the class in $H^1({K((t^{\frac{1}{\infty}}))}, G)$ is trivial. Therefore it has to be trivial already in $H^1({K((t^{\frac{1}{n}}))}, G)$ for some $n \geq 1$.
	In other words, the $G$-torsor 
	$$
	\Spec K((t^{\frac{1}{n}})) \times_U V \to \Spec K((t^{\frac{1}{n}}))
	$$
	has a section.
	This section induces a commutative diagram as follows:
	\[
	\begin{tikzcd}
	& V \arrow[d] \\
	\Spec K((t^{\frac{1}{n}})) \arrow[r] \arrow[ru] & U                .
	\end{tikzcd}
	\]
	By the valuative criterion of properness (applied to $Y^{\sm} \to X$), we may extend the diagram above to the following:
	\[
	\begin{tikzcd}
	& Y^{\sm} \arrow[d] \\
	{\Spec K[[t^{\frac{1}{n}}]]} \arrow[r] \arrow[ru] & X                .
	\end{tikzcd} 
	\]
	Since the lower morphism specializes to $P$, the specialization of the diagonal morphism provides the sought lift of $P$.
\end{proof}

\section{Harari's question}\label{SSec:Reformulation}

\paragraph{Setup} Recall that $p:U \to \Spec K$ is a  geometrically integral smooth variety over a number field $K$, $X$ is a smooth compactification of $U$, and $\lambda:V \to U$ is a torsor under a finite group scheme $G/K$. We assume here that $G=A$ is commutative, and let $A'=\Hom(A,\G_{m,K})$ be its Cartier dual.  For every $v$, define $E_v:=\operatorname{im}\left(U\left(K_v\right) \rightarrow \mathrm{H}^1\left(K_v, A\right), P_v \mapsto[V|_{P_v}]\right)$ (this is not a subgroup in general).  Let $S$ be a finite set of places of $K$, let $\left(P_v\right)_{v \in S} \in \prod_{v \in S} U\left(K_v\right)$ , and $f_v \coloneqq [V|_{P_v}], \ v \in S$.

Let $\Br_{\lambda}U < \operatorname{Br}_1(U)$ be the subgroup generated by the cup-products $p^*b \cup[V]$, as $ b$ varies in  $\mathrm{H}^1(K, A')$, and let $B \coloneqq \Br_{\lambda}U \cap \operatorname{Br}(X)$. 

\begin{question}[Harari]\label{QHarari}
    Assume that there is no Brauer--Manin obstruction for $(P_v)_{v \in S}$ with respect to $B$. Does there exist then an $a \in \mathrm{H}^1(K, A)$ such that $a_v=f_v$ for all $v \in S$ and $a_v \in E_v$ for $v \notin S$?
\end{question}

Using Poitou-Tate duality, one may obtain a positive answer to Question \ref{QHarari} by replacing $E_v$ with the subgroup $\left\langle E_v\right\rangle$ of $\mathrm{H}^1(K_v, A)$ generated by it (we leave this as an exercise to the interested reader, or see \cite{Tesi}). However, as originally remarked by Harari, there is a big difference between $E_v$ and $\left\langle E_v\right\rangle$ in general!

The following proposition relates Question \ref{QHarari} with Question \ref{Q11}:

\begin{proposition}\label{Prop:reformulation}
    Assume that $\Br X/\Br_0 X$ is finite and $X(\A_K) \neq \emptyset$. Then the identity
    \[
    X(\A_K)^{\lambda}=X(\A_K)^{B }
    \]
    holds if and only if there exists a finite $S_0 \subseteq M_K$ such that, for all $S \supseteq S_0$, Question \ref{QHarari} has a positive answer.
\end{proposition}
\begin{proof}
    The assumption implies that $B$ is finite.

    We prove the forward implication first. Let $S_0$ be a set of places such that the Brauer--Manin pairing associated to $B$ is trivial outside $S_0$.
    Let $S \subseteq M_K$ be a finite set containing $S_0$, and $\left(P_v\right)_{v \in S} \in \left(\prod_{v \in S} U\left(K_v\right)\right)^B$. We wish to find an $a \in H^1(K,A)$ such that $a_v =f_v$ for all $v \in S$ and $a_v \in E_v$ for all $v \notin S$. Let $P_v, v \notin S$ be any point of $U(K_v)$. Note that $(P_v)_{M_K} \in X(\A_K)^{B}.$ Since $X(\A_K)^{\lambda}=X(\A_K)^{B},$ we may approximate arbitrarily well $(P_v)_{M_K}$ with an adelic point $(Q_{v})_{v \in M_K}$ such that there exists $a \in H^1(K,A)$ for which $(Q_{v})_{v \in M_K} \in \lambda_{a}(V_{a}(\A_K))$. In particular, $Q_v \in \lambda_{a}(V_{a}(K_v))$ for each $v$, or in other words the torsor $[V_a|_{Q_v}]$ over $K_v$ contains a $K_v$-point and is thus trivial. It follows that  $0=[V_a|_{Q_v}]=[V|_{Q_v}]-a_v\in H^1(K_v,A)$ for all $v$, and hence $a_v \in E_v$ for all $v$. Moreover, the map $[V|_{-}]:U(K_v) \to H^1(K_v,A)$ is locally constant, and thus  $[V|_{Q_v}]=[V|_{P_v}]=f_v$ for $v \in S$. The class $a$ is now the sought class.

    For the other direction, assume there is an $S_0$ as in the statement.  We need to show that $X(\A_K)^{\lambda}=X(\A_K)^{B }.$ We may assume after enlarging $S_0$ that its Brauer--Manin pairing on $X$ is trivial for all places $v \notin S_0$.

    Let $(P_v)_{M_K} \in X(\A_K)^{B}$. Since $B$ is finite, $X(\A_K)^{B} \subseteq X(\A_K)$ is open.  In particular, after an arbitrarily small approximation, we may assume that  $(P_v)_{M_K} \in U(K_{\Omega})^{B}$.    Then, for all $S \supseteq S_0$, our assumption that the answer to Question \ref{QHarari} is ``yes'' shows that there exists an $a \in H^1(K,A)$ such that $a_v=[V|_{P_v}]$ for $v \in S$ and $a_v = [V|_{Q_v}]$ for some $Q_v \in U(K_v)$ for $v \notin S$. Thus the $A$-torsor $V_a \to X$ specializes to the trivial torsor over $P_v, v \in S$ and over $Q_v, v \notin S$, and hence there exists a point $(R_v) \in V_a(K_{\Omega})$ whose image is $P_v$ for $v \in S$ and $Q_v$ for $v \notin S$. In particular, $V_a(K_{\Omega}) \neq \emptyset$ and therefore $V_a(\A_K) \neq \emptyset$, and we may thus modify $R_v$ so that $(R_v) \in V_a(\A_K)$ and so that the image of $R_v$ is $P_v$ for $v \in S$. We now modify $Q_v$ to $\lambda_a(R_v)$ for $v \notin S$, and by construction we have that $((P_v)_{v \in S}, (Q_v)_{v \notin S}) \in U(\A_K)^{\lambda}$.
    Enlarging $S$,  the points $((P_v)_{v \in S}, (Q_v)_{v \notin S})$ approximate arbitrarily well $(P_v)_{M_K}$, and thus we deduce that $(P_v)_{M_K} \in X(\A_K)^{\lambda}$. 
\end{proof}



\section{A Brauer--Manin obstruction to ramified descent}\label{Sec:Obstruction}

We recall that $\lambda:V \to U$ is a torsor under a finite group scheme $G/K$, that $U$ is smooth and geometrically integral over $K$, that $X$ is a compactification of $U$, and that $Y \to X$ is the relative normalization of $X$ in $V$.
We defined:
\[
X(\A_K)^{\lambda}=\overline{{\bigcup_{\xi \in H ^1(K,G)}\lambda_{\xi}(V_{\xi}(\A_K))} }
\]

We define in this section a subgroup $\Br_{\lambda}^{ram}X \subseteq \Br X$  such that $X(\A_K)^{\lambda} \subseteq X(\A_K)^{\Br_{\lambda}^{ram}X}$. The group $\Br_{\lambda}^{ram}X$ may be transcendental, as we show in Section \ref{Sec:GBsp}.

\subsection{Definition of $\Br_{\lambda}^{ram}X$}

Let $\mathfrak{l}$ be the composition $V_{\oK} \to  V \to[\lambda] U$. There are natural $G(\oK)$- and $\Gamma_K$-actions on $V_{\oK}$, the first induced by the $G$-action on $V$, and the second via the second factor of $V_{\oK}=V \times_K \Spec \oK$.

\begin{lemma}\label{Lemprofinite}
    The $G(\oK)$- and the $\Gamma_K$- actions generate a $(G(\overline{K}) \rtimes \Gamma_K)$-action on $V_{\oK}$. The profinite \'etale cover $\mathfrak{l}:V_{\oK} \to U$ is a profinite torsor under this $(G(\overline{K}) \rtimes \Gamma_K)$-action.
\end{lemma}
\begin{proof}
    Let $L/K$ be a field extension. The points of $V_L$ with values in a $K$-algebra $R$ are described by
    \begin{equation}\label{EqDisjUnion}
        V_L(R)=\bigsqcup_{\iota:L \hookrightarrow R} V(R),
    \end{equation}
    where $\iota$ ranges among all $K$-embeddings of $L$ in $R$. Consider the natural actions:
    \begin{enumerate}[label=(\roman*)]
        \item the right action of $G(L)$ on $V_L$ defined by letting $G(L)$ act via the map $G(L) \to G(R)$ on each disjoint set appearing in \eqref{EqDisjUnion};
        \item when $L/K$ is Galois, the right $\Gal(L/K)$-action defined by letting $\gamma \in \Gal(L/K)$ act via $\iota \mapsto \iota \circ \gamma^{-1}$.
    \end{enumerate}

    For $g \in G(L)$ and $\gamma \in \Gal(L/K)$, we have $\leftidx^{\gamma}g \cdot \gamma \cdot x= \gamma \cdot g \cdot x$, where $x$ is an $R$-point of $V_L$. By this relation, the two actions above generate a $(G(L) \rtimes \Gal(L/K))$-action on $V_L$. This action is fixed-point-free and it commutes with the projection $V_L \to U$. In addition, when $L/K$ is finite and splits $G$ (i.e.\ when $G(L)=G(\oK)$), letting $\Gamma \coloneqq G(L) \rtimes \Gal(L/K)$, we have $|\Gamma| = |G(\oK)|\cdot [L:K] = \deg (V_L \to U)$. Thus the morphism $(\mu,id):\Gamma \times_U V_L \to V_L \times_U V_L$, where $\mu$ denotes the $\Gamma$-action on $V_L$, is an injective morphism of finite \'etale covers of $U$ of the same degree, and hencean isomorphism. In other words, $V_L \to U$ is a torsor under $\Gamma$.
    
    When $L=\oK$, the two actions (i) and (ii) are the ones described before the statement of the lemma, and taking an inverse limit over all finite Galois subextensions $L \subset \oK$ that split $G$ (these form a cofinal subset) finishes the proof.
\end{proof}

Let $\Gamma_G \coloneqq G(\overline{K}) \rtimes \Gamma_K$. Through the construction of Section \ref{Sec:Notation}, the profinite \'etale torsor $V_{\oK} \to U$ under $\Gamma_G$ gives rise to a \v{C}ech-to-\'etale map on cohomology:
\begin{equation}\label{Eq:Def}
\check{C}_{\mathfrak{l}}:{H}^2(\Gamma_G,\oK[V]^*) \to {H}^2(U,\G_{m}),
\end{equation}
where $\Gamma_G$ acts on $\G_{m}(V_{\oK})= \oK[V]^*$ by pullback. The restriction of this $\Gamma_G$-action on $\oK^* \subseteq \oK[V]^*$ is equal to the pullback of the natural $\Gamma_K$-action along the projection $\Gamma_G \to \Gamma_K$. Hence we have a natural morphism:
\[
H^2({\Gamma_G, \oK^*}) \to {H}^2(\Gamma_G,\oK[V]^*)={H}^2(\Gamma_G,\G_{m}(V_{\oK})),
\]
where the implied action on the LHS is the pullback described above.

\begin{definition}\label{Def:TheObstruction}
	We define the subgroup $\Br_{\lambda}^{ram}(U)$ of $\Br U$ as the image of the composition 
	$$ H^2({\Gamma_G, \oK^*}) \to {H}^2(\Gamma_G,\oK[V]^*)\to[\check{C}_{\mathfrak{l}}] {H}^2(U,\G_{m})=\Br U.$$ 
	We define $\Br_{\lambda}^{ram}X \subseteq \Br(X)$ as the intersection $\Br(X) \cap \Br_{\lambda}^{ram}(U)$.
\end{definition}

\begin{remark}
    The fact that $U$ does not appear in the notation ``$\Br_{\lambda}^{ram}X$'' is justified by the fact that $\Br_{\lambda}^{ram}X$ may be defined purely in terms of the ramified $G$-cover $Y \to X$ (and, in fact, only in terms of the ``generic fiber'' $G$-torsor $\Spec K(Y) \to \Spec K(X)$). Indeed
    $
    \Br_{\lambda}^{ram}X =\Br(X) \cap \Br_{\lambda}^{ram}(K(X)),$ 
    where $\Br_{\lambda}^{ram}(K(X)) \subseteq \Br (K(X))$
    is defined as the image of $H^2(\Gamma_G,\oK^*)$ in $H^2(K(X),\G_m)$ through the morphism:
    \[
    H^2(\Gamma_G,\oK^*) \to H^2(\Gamma_G,\oK(Y)^*) \to[\check{C}_{\mathfrak{l}|_{K(X)}}] H^2(K(X),\G_m).
    \] 
\end{remark}

\begin{remark}\label{Rmk:Piceq0}
        When $\oK[V]^*/\oK^*=\Pic V_{\oK}=0$, the Hochshild-Serre spectral sequence $H^i(\Gamma_G,H^j(V_{\oK},\G_m)) \Rightarrow H^{i+j}(U, \G_m)$ yields the short exact sequence $0 \to H^2(\Gamma_G,\oK^*) \to \Br U \to \Br V_{\oK}.$
	Hence, in this case, $\Br_{\lambda}^{ram}(U)=\Ker (\Br U \to \Br V_{\oK})$ and
	\[
	\Br_{\lambda}^{ram}X=\Br_{\lambda}^{ram}(U) \cap \Br X=\Ker (\Br X \to \Br V_{\oK}).
	\]
\end{remark}

Although the condition $\oK[V]^*/\oK^*=\Pic V_{\oK}=0$ is rarely satisfied in practice, we give some examples below.
\begin{example}\label{ExTutto0}
    (I). If $V$ is a simply connected semi-simple algebraic group, then  $\oK[V]^*/\oK^*=\Pic V_{\oK}=0$. To get an example of a $G$-torsor $V \to U$, we may take as $G$ any finite subgroup(-scheme) of $V$, and define $U \coloneqq V/G$.

    (II). If $V$ is a universal torsor of a smooth proper rationally connected variety (as defined in \cite{CTS87}), one has  $\oK[V]^*/\oK^*=\Pic V_{\oK}=0$ (see (2.1.1) of {\em loc.cit.}). We then let $G$ be any finite subgroup of the N\'eron-Severi torus $T$ (i.e.\ the one under which $V$ is a torsor), and let $U =V/G$.
 
    (III). Finally, we give an example where both $V$ and $U$ are open K3 surfaces. Let $\mathcal E \to \P^1_K$ be an elliptic K3 surface with Mordell-Weil group $\Z/2\Z$, with (exactly) two reducible fibers of Kodaira types $I_{10}^*$ and $I_2$, and such that the $0$-section $O$ and the $2$-torsion section $\tau$ intersect different components of the $I_2$-fiber. Such an $\mathcal E$ exists, and it may be defined over $\Q$, see e.g.\ 5.2 and 5.3 of \cite{Kumar}, where $\mathcal E$ is realized as the double-cover of the Kummer surface associated to the Jacobian of a curve of genus $2$. Then the Picard group of $\mathcal E_{\oK}$ is of rank $16$, freely generated by the fourteen components of the $I_{10}^*$-fiber, by $O$ and by $\tau$. In particular, letting $V$ be the complement of these divisors in $\mathcal E$, we have that $\oK[V]^*=\oK^*$ and $\Pic \bar V=0$. The involution on $\mathcal E$ induced by $\tau$ restricts to an involution of $V$, let $G\cong \Z/2\Z$ be the group generated by it. A smooth minimal compactification $X$ of the quotient $U=V/G$ is an elliptic surface isogenous to the original K3 elliptic surface $\mathcal E \to \P^1$, and is thus K3. (In fact, if $\mathcal E$ is as in \cite{Kumar}, the quotient $U=V/G$ is a Kummer surface, see {\em loc.cit.}).
\end{example}

\begin{proposition}\label{Prop:InclusionBraBr}
    Assume that $G=A$ is commutative. Then $ \Br(X) \cap \Br_{\lambda}U$ is contained in $\Br_{\lambda}^{ram}X $.
\end{proposition}

\begin{proof}
    We prove the stronger inclusion $ \Br_{\lambda}U \subseteq \Br_{\lambda}^{ram}U.$ Recall that $\Br_{\lambda}U$ is generated by the cup-products $p^*b \cup [V]$, as $b$ varies in $H^1(K,A')$. Fix a $b \in H^1(K,A')$. Both classes $p^*b$ and $[V]$ trivialize when pulled back along the pro-\'etale cover $V_{\overline{K}} \to U$. Hence, following e.g.\ \cite[p.18]{Skorobogatov}, there exist  \v{C}ech-cocycles $b_V\in \check{H}^1(V_{\oK}/U, A'),\alpha_V \in \check{H}^1(V_{\oK}/U, A)$ that represent $p^*b$ and $[V]$, i.e.\ such that $\check{C}_{\mathfrak{l}}(b_V)=p^*b$ and $\check{C}_{\mathfrak{l}}(\alpha_V)=[V]$. 

    We have a commutative diagram of cup-products \cite[Corollary 3.10]{Swan}
    \[
    \begin{tikzcd}
    { \check{H}^1(V_{\oK}/U, A') } \arrow[d, "=", no head]              & \times & { \check{H}^1(V_{\oK}/U, A) } \arrow[d, "=", no head] \arrow[r, "\cup"]              & {\check{H}^2(V_{\oK}/U,\oK^*)} \arrow[d]                               \\
    { \check{H}^1(V_{\oK}/U, A') } \arrow[d, "\check{C}_{\mathfrak l}"] & \times & { \check{H}^1(V_{\oK}/U, A) } \arrow[d, "\check{C}_{\mathfrak l}"] \arrow[r, "\cup"] & {\check{H}^2(V_{\oK}/U,\oK[V]^*)} \arrow[d, "\check{C}_{\mathfrak l}"] \\
    {H^1(U,A')}                                                         & \times & {H^1(U,A)} \arrow[r]                                                                 & {H^2(U,\G_m)},
    \end{tikzcd}
    \]
    see e.g.\ \cite{Swan} for the definition of the \v{C}ech cup-product on the first two rows.
    In particular, we have that $\check{C}_{\mathfrak{l}}(b_V \cup \alpha_V)=\check{C}_{\mathfrak{l}}(b_V)  \cup \check{C}_{\mathfrak{l}}(\alpha_V)=p^*b \cup [V]$, and $p^*b \cup [V]$ belongs to the image $\Br_{\lambda}^{ram}U$ of the composition $\check{H}^2(V_{\oK}/U,\oK^*) \to \check{H}^2(V_{\oK}/U,\oK[V]^*) \to[\check{C}_{\mathfrak l}] H^2(U,\G_m)$. 
\end{proof}

We go on to prove Proposition \ref{Prop2}, i.e.\ that $X(\A_K)^{\lambda}$ is contained in $X(\A_K)^{\Br_{\lambda}^{ram}X}$. 
I profoundly thank one of the anonymous referees, who provided the following proof, which simplifies a lot the previous one the author had in mind.

\begin{proof}[Proof of Proposition \ref{Prop2}]
    We shall prove that $U(\A_K)^{\lambda} \subseteq U(\A_K)^{\Br_{\lambda}^{ram}U}$, from which the statement follows by taking adelic closures in $X(\A_K)$ and noting that $U(\A_K)^{\Br_{\lambda}^{ram}U} \subseteq X(\A_K)^{\Br_{\lambda}^{ram}X}$.

    Let $(P_v)_{v \in M_K} \in U(\A_K)^{\lambda}$ and let $\xi \in Z^1(K,G)$ be such that $P_v= \lambda_{\xi}(Q_v), {v \in M_K}$ for some $(Q_v)_{v \in M_K} \in V_{\xi}(\A_K)$. The cocycle $\xi$ defines a section $s_{\xi}=(\xi,id)$ of the surjection:
    \[
    \Gamma_G = G(\oK) \rtimes \Gamma_K \to  \Gamma_K \to 1.
    \]
    Recall that $V_{\oK}$ is naturally endowed with a $\Gamma_G$-action that makes $V_{\oK} \to U$ a profinite $\Gamma_G$-torsor. One easily checks from the definition of twist \cite[p.12]{Skorobogatov} that
    \[
    V_{\xi}= V_{\oK} / s_{\xi}(\Gamma_K).
    \]
    Thus we have a commutative diagram:
    \[
\begin{tikzcd}
\overline{V} \arrow[d, "\Gamma_G"] & \overline{V} \arrow[l, "="] \arrow[d, "\Gamma_K"] \\
U                                  & V_{\xi} \arrow[l]                                
\end{tikzcd},
    \]
    equivariant with respect to $s_{\xi}:\Gamma_K \to \Gamma_G$,
    which induces by functoriality of $\check{C}$ the following commutative diagram:
    \[
\begin{tikzcd}
{H^2(\Gamma_G, \oK^*)} \arrow[d, "res^{\Gamma_G}_{s_{\xi}(\Gamma_K)}"] \arrow[r] & {\check{H}^2(\overline{V}/U,\G_m)} \arrow[d, "\lambda_{\xi}^*"] \arrow[r, "\check{C}_{\mathfrak{l}}"] & \Br U \arrow[d, "\lambda_{\xi}^*"] \\
\Br K = {H^2(\Gamma_K, \oK^*)} \arrow[r]                                                 & {\check{H}^2(\overline{V}/V_{\xi},\G_m)} \arrow[r, "\check{C}_{\overline{V}/V_{\xi}}"]                    & \Br V_{\xi}                          
\end{tikzcd}.
    \]
    Recalling that $\Br_{\lambda}^{ram}U$ is defined to be the image of the upper composition, we deduce by the commutativity that   $\lambda_{\xi}^*\Br_{\lambda}^{ram}U \subseteq \Br_0 V_{\xi}$. Thus, for any $B \in \Br_{\lambda}^{ram}U$:
    \[
    ((P_v),B)_{BM}=((Q_v),\lambda_{\xi}^*B)_{BM}=0,
    \]
    as wished.
\end{proof}

In summary, we have the following series of inclusions:
\begin{equation}\label{EqInclusions}
    X(\A_K)^{\lambda}\subseteq X(\A_K)^{\Br_{\lambda}^{ram}X} \subseteq X(\A_K)^{\Br X \cap \Br_{\lambda}U}.
\end{equation}

However, in contrast to what happens on $U$ (where the inclusions \linebreak $U(\A_K)^{\lambda}\subseteq U(\A_K)^{\Br_{\lambda}^{ram}U} \subseteq U(\A_K)^{\Br_{\lambda}(U)}$ are actually identities by \cite{opendescent}), the last inclusion in \eqref{EqInclusions} may well be strict! (See Section \ref{Sec:GBsp}).

\section{Unramified Brauer groups of $SL_n/H$, $H$ metabelian}\label{Sec6}

For Section \ref{Sec6} fix a finite group scheme $H$ over a field $k$ of characteristic $0$. Let $B=[H,H]$, and $A=H/[H,H]$. We assume that $H$ is {\bf metabelian}, i.e.\ that $B$ is commutative.

Let $U=SL_{n,k}/H$, and $V=SL_{n,k}/B$. As $B$ is normal in $H$ with quotient $A$, the morphism $\lambda:V \to U$ is an $A$-torsor, where the $A$-action on $V$ is the one that on $\ok$-points is given by
$
(SL_{n,K}/B) \times A \to SL_{n,K}/B,
(xB,a) \mapsto xBa=xaB.
$ 

Recall that $\mathfrak{l}:\overline{V} \to U$ induces a \v{C}ech-to-\'etale map on cohomology:
\[
\check{C}_{\mathfrak{l}}:H^2(\Gamma_A, \oK^*) =H^2(\Gamma_A, \oK[V]^*) \to \Br U.
\]

 The author thanks Olivier Wittenberg for making him notice the following:

\begin{theorem}\label{Thm:BrSLnG}
    If $\Sha^1_{\omega}(K,B')=0$, then $\Br X = \Br_{\lambda}^{ram}X$.
\end{theorem}

(Recall that $X$ denotes a smooth compactification of $U$.)

\begin{proof}
It suffices to prove that $\Br_{ur} U \subseteq \im H^2(\Gamma_A, \overline{K}^*)$.
	Consider the Hochschild-Serre spectral sequence of $\overline{V} \to[\Gamma_G] U$, keeping in mind that $\overline{K}[V]^*=\overline{K}^*$ and $\Pic \overline{V} =B'$, we get the following exact sequence:
	\begin{equation}\label{Eq:HSSS610}
	H^2(\Gamma_A,\overline{K}^*) \to \Ker (\Br U \to \Br \overline{V}) \to H^1(\Gamma_A,B')
	\end{equation}
	Since $\Sha^1_{\omega}(K,B')=0$, we have $\Br_{ur} V=\Br K$ by \cite[Prop.\  4]{Harari2007}. Thus any element of $\Br_{ur} U$ lies in the kernel of $\Br U \to \Br \overline{V}$. In addition, we know that after base-changing to $\overline{K}$ every element of  $\Br_{ur} U$ comes from $H^2(A,\overline{K}^*)$ (see \cite[Lemma 5.1]{BogomolovMumbai}). In particular, $\Br_{ur} U$ maps to $\Ker(H^1(\Gamma_A,B') \to[\res] H^1(A,B'))$ in the sequence above. By the inflation-restriction five-term sequence of $A \trianglelefteq \Gamma_A$, we have that $\Ker(H^1(\Gamma_A,B') \to H^1(A,B'))=H^1(\Gamma_K,B')$. For $\beta \in \Br_{ur} U$, we denote by $\delta(\beta)$ its image in $H^1(\Gamma_K,B')$.

	Finally, by functoriality of the Hochschild-Serre spectral sequence, we get the following commutative diagram:
	\[
	\begin{tikzcd}
	{H^2(\Gamma_A,\overline{K}^*)} \arrow[d] \arrow[r] &  \Ker (\Br U \to \Br \overline{V}) \arrow[d] \arrow[r] & {H^1(\Gamma_A,B')} \arrow[d] \\
	{H^2(\Gamma_K,\overline{K}^*)} \arrow[r]           &  \Ker (\Br V \to \Br \overline{V}) \arrow[r]           & {H^1(\Gamma_K,B')}, 
	\end{tikzcd}
	\]
	where the second row is just \eqref{Eq:HSSS610} with $A=0$. Hence (again because $\Br_{ur}V=\Br K$) every element of $\Br_{ur} U$ has to map to $0$ in $ {H^1(\Gamma_K,B')}$. This implies that $\delta(\beta)=0$ for every $\beta$. Hence $\Br_{ur}U \subseteq \im H^2(\Gamma_A,\overline{K}^*) =\Br_{\lambda}^{ram}U$, therefore $\Br X=\Br_{ur}U=\Br_{\lambda}^{ram}U \cap \Br X= \Br_{\lambda}^{ram}X$, as wished.
\end{proof}

By Chebotarev's theorem, $\Sha^1_{\omega}(K,B')=0$ when $B'$ is constant. We thus get:

\begin{corollary}
    If $B$ is constant of exponent $e$ and $\mu_e \subseteq K^*$, then $\Br X = \Br_{\lambda}^{ram}X$.
\end{corollary} 

Proposition \ref{Prop:gbsp} is a consequence of the above corollary.

\subsection{Nilpotent $H$}\label{Sec:GBsp}

For the rest of the section we assume that $B$ is central in $H$ and that $H$ is constant of prime exponent $p$, where $p \neq 2$ and $\mu_p \subseteq k^*$. We choose a primitive $p$-th root of unity, and use it to identify throughout this subsection $\mu_p$ with $\Z/p\Z$.

In particular, we shall always tacitly identify $\mu_p$ with $\Z/p\Z$, after making the implicit choice of a $p$-th root of unity. 

Our aim in this subsection is to explicitly describe $\Br_{ur}U$, see Theorem \ref{ThmBogomolovext} below.
We may naturally associate to the central extension 
\[
1 \to B \to H \to A \to 1
\]
a class $[H] \in H^2(A,B)$ (see \cite[Sec.\  IV.3]{Brown}). We denote by $[-,-]: A \times A \to B$ the map that sends $a_1,a_2 \in A$ to the commutator of (any two) lifts $\bar{a}_1,\bar{a}_2$ of them in $H$.
Let $\mathfrak{c}: \Lambda^2A \to B$ be the natural homomorphism $a_1\wedge a_2 \mapsto [a_1,a_2]$. 

Recall that $A^D=\Hom(A, \qz)$. We shall make frequent use of the identification:
\[
\Lambda^2(A^D) = (\Lambda^2 A)^D, \alpha_1 \wedge \alpha_2 \mapsto \left(a_1\wedge a_2 \mapsto \alpha_1(a_1) \alpha_2(a_2)- \alpha_1(a_2) \alpha_2(a_1)\right).
\]
We denote $\Lambda^2(A^D) = (\Lambda^2 A)^D$ with $\Lambda^2A^D$.
Let $(\Lambda^2 A^D)_{bic} \subseteq \Lambda^2 A^D$ be the subgroup of elements $\beta \in \Lambda^2 A^D$ such that $\beta(a_1 \wedge a_2)=0$ for any $a_1,a_2 \in A$ with $[a_1,a_2]=0$.


Let $\xi_U:\Lambda^2A^D=\Lambda^2H^1(A, \Z/p\Z) \to \Br U$ be the composition of the three maps:
\begin{align}
    \Lambda^2 H^1(A, \Z/p\Z) \to[\cup] H^2(A, \Z/p\Z) & & \text{(cup-product)} \\
    H^2(A, \Z/p\Z)=H^2(A, \mu_p) \to[\check{C}_{\lambda}] H^2(U,\mu_p) &  &\text{(\v{C}ech-to-\'etale map)} \\
    H^2(U,\mu_p) \to H^2(U,\G_m) = \Br U &  &\text{(changing coefficients)}
\end{align}
We also define analogously a map  $\xi_{\overline{U}}:\Lambda^2 A^D \to  \Br \overline{U}.$  
The following is a reformulation of a result of Bogomolov (see Lemma 5.1 of \cite{BogomolovMumbai}):

\begin{theorem}[Bogomolov, reformulated]\label{ThmBogomolov}
    Assume that $k$ is algebraically closed. The image of $(\Lambda^2 A^D)_{bic}$ under $\xi_U$ is unramified, and the following sequence is exact:
    \begin{equation}
        \label{Bogourkclosed}  B^D \to[\mathfrak{c}^D] (\Lambda^2 A^D)_{bic} \to[\xi_U] \Br_{ur} U \to 1.
    \end{equation} 
\end{theorem}

We introduce the following notation used in the proof. Let \(\gamma \in \Lambda^2 A^D = \linebreak \operatorname{Hom}(\Lambda^2 A, \mathbb{Q} / \mathbb{Z})\) be a cocycle. Define \(H_{\gamma}\) as the central extension of \(A\) by \(\mathbb{Q} / \mathbb{Z}\), characterized by the following property: for any \(a_1, a_2 \in A\) the commutator of their corresponding lifts \(\bar{a}_1, \bar{a}_2\) in \(H_{\gamma}\) is
\[
\bar{a}_1 \bar{a}_2 \bar{a}_1^{-1} \bar{a}_2^{-1} = \gamma(a_1 \wedge a_2) \in \mathbb{Q} / \mathbb{Z}.
\]

\begin{proof}[Proof of Theorem \ref{ThmBogomolov}]
    By Lemma 5.1 of \cite{BogomolovMumbai}, $\Br_{ur} U$ is isomorphic to $\Coker( B^D \to (\Lambda^2 A^D)_{bic})$ (the group $S/S_{\Lambda}$ appearing in {\em loc.cit.} is the dual of $\Coker(B^D \to (\Lambda^2 A^D)_{bic})$). It only remains to show that the induced homomorphism $\xi_{\text{Bog}}:(\Lambda^2 A^D)_{bic} \to \Br_{ur} U$ is $\xi_U$. This follows from the proof of \cite[Lemma 5.1]{BogomolovMumbai}, but we include a detailed proof for completeness.
     
     One may read from pp.\ 462 and 469 in \cite{BogomolovMumbai} that the induced morphism $\xi_{\text{Bog}}:(\Lambda^2 A^D)_{bic} \to \Br_{ur}U$  is the restriction of the composition :
    \begin{equation}\label{Bogomolovmap}
       \Lambda^2 A^D \cong H^2(A, \qz) = H^2(A, \mu_{\infty}) \to H^2(A, k^*) \to[\check{C}_{\lambda}] \Br U, 
    \end{equation}
    where the last map is defined via the Hochschild-Serre spectral sequence, and the first isomorphism is defined by sending an element $\gamma \in \Lambda^2 A^D =\Hom(\Lambda^2A, \qz)$ to the class in $H^2(A, \qz)$ of the central extension $H_{\gamma}$ of $A$.
    The following lemma then shows that this composition is $\xi_U$.
    \end{proof}

    \begin{lemma}
        The isomorphism $\Lambda^2 A^D \cong H^2(A,\qz), \gamma \mapsto [H_{\gamma}]$ coincides with the composition $\Lambda^2 A^D \to[\cup] H^2(A,\Z/p\Z) \to H^2(A,\qz)$.
    \end{lemma}
    \begin{proof}
        For a monomial $\gamma = \gamma_1 \wedge \gamma_2 \in \Lambda^2 A^D$, the extension $H_{\gamma}$ may be realized as the set $\qz \times A$ with multiplication defined by $(b_1,a_1)\cdot (b_2,a_2)= (b_1+b_2+\gamma_1(a_1) \gamma_2(a_2),a_1+a_2)$. The class $[H_{\gamma}] \in H^2(A,\qz)$ is then represented  by the $2$-cocycle $a_1,a_2 \mapsto \gamma_1(a_1)\gamma_2(a_2)$ \cite[p.92]{Brown}. This is precisely the cup product $\gamma_1 \cup \gamma_2$ \cite[Proposition 1.4.8]{GermanBook}. Thus the two maps coincide on monomials  $\gamma_1 \wedge \gamma_2$. Since these span $\Lambda^2 A^D$, the two maps coincide, as wished.
    \end{proof}


Let $\Br_e U = \Ker (e^* : \Br U \to \Br K)$, where $e$ is the $K$-point corresponding to the equivalence class of the identity in $SL_n/H$. Recall that we have a direct sum decomposition $\Br U = \Br_e U \oplus \Br_0 U$, where $\Br_0 U \cong \Br K$ denotes the constant Brauer elements. Let $\Br_{e,ur}U =\Br_e U \cap \Br_{ur}U.$ We now remove the algebraically closed assumption from Theorem \ref{ThmBogomolov}:

\begin{theorem}\label{ThmBogomolovext}
    The image of $(\Lambda^2 A^D)_{bic}$ under $\xi_U$ is contained in $\Br_{e,ur}U$, and the following sequence is exact:
    \begin{equation}
        \label{Bogour}  B^D \to[\mathfrak{c}^D] (\Lambda^2 A^D)_{bic} \to[\xi_U] \Br_{e,ur} U \to 1.
    \end{equation} 
\end{theorem}
\begin{proof}
    The $A$-torsor $\lambda$ restricts to the trivial torsor over $e \in U(K)$, which implies that the image of $\xi_U$ is contained in $\Br_eU$ (see Remark \ref{Rmk:trivialtorsorCech}).

    By Bogomolov's theorem \ref{ThmBogomolov} and the key lemma \ref{keylemma} below, we have that $\xi_U((\Lambda^2A^D)_{bic}) \subseteq \Br_{ur}U$.     
    Finally, the composition
    \[
    \xi_{\overline{U}}:(\Lambda^2A^D)_{bic} \to[\xi_U] \Br_{e,ur} U \hookrightarrow \Br_{ur} \overline{U}.
    \]
    is surjective with kernel $\im (\mathfrak{c}^D:B^D \to \Lambda^2 A^D)$ by Bogomolov's Theorem, while the second morphism is injective by \cite[Proposition 5.9]{Arteche}. Hence the last map is an isomorphism, and the statement follows.
\end{proof}

    \begin{lemma}\label{keylemma}
    For $\alpha \in \Lambda^2A^D$, $\xi_U(\alpha) \in \Br_e U$ is unramified if and only if   $\xi_{\overline{U}}(\alpha) \in   \Br \overline{U}$ is unramified.
    \end{lemma}
    
    \begin{proof}    
    The forward implication is clear, we prove the converse. Let $\alpha \in \Lambda^2 A^D$ be an element whose image in $\Br \overline{U}$ is unramified. By Bogomolov's Theorem \ref{ThmBogomolov}, this unramifiedness is equivalent to $\alpha$ lying in $(\Lambda^2 A^D)_{bic}$.

    We first assume that $k=K$ is a number field. Let $v$ be a finite place of $k$ coprime with $p$. Let $P \in U(K_v)$. Let $\psi$ be the projection $SL_n \to SL_n/H$. Choose a geometric point $\bar P \in \psi^{-1}(P)(\overline{K_v})$, and let $f:\Gamma_v \to H$ be the homomorphism defined by $\gamma \cdot \bar P = \bar P \cdot f(\gamma)$. Since $v\nmid p$ the homomorphism $f$ factors though the tame decomposition group $\Gamma_v^{tame}$, which is topologically generated by elements $\iota$ and $\phi$ satisfying $\phi\iota\phi^{-1}=\iota^{Nv}$. Since $\mu_p \subseteq K^*$, we have $Nv \equiv 1 \bmod p$. In particular, since the exponent of $H$ is $p$, the images of $\iota$ and $\phi$ in $H$ commute. The group $A'=\langle f(\iota), f(\phi) \rangle$ is then bicyclic. The point $P$ lifts via $(SL_n/A')(K_v) \to (SL_n/H)(K_v)$. 

    Let $g:A' \to A$ be the projection, and consider the commutative diagram:
    \[
    \begin{tikzcd}
    \Lambda^2(A')^D \arrow[d, "\xi_{SL_n/A'}"] & \Lambda^2A^D \arrow[l, "g^*"] \arrow[d, "\xi_U=\xi_{SL_n/H}"] \\
    \Br (SL_n/A')                                & \Br (SL_n/H) \arrow[l]                                  
    \end{tikzcd}.
    \]
    Let $\bar{a}_1=f(\iota), \bar{a}_2=f(\phi)$, $a_i$ be the image of $\bar{a}_i$ in $A$. Note that $\Lambda^2A'$ is generated by $\bar{a}_1 \wedge \bar{a}_2$, and $g^*\alpha(\bar{a}_1 \wedge \bar{a}_2)=\alpha(a_1 \wedge a_2)$. Since $\bar{a}_1$ and $\bar{a}_2$ commute and $\alpha \in (\Lambda^2 A^D)_{bic}$, $g^*\alpha$ is $0$.
    It follows that $\xi_U(\alpha)$ maps to $0$ in $\Br (SL_n/A')$, and thus specializes to $0$ at all points lying in the image of $(SL_n/A')(K_v) \to (SL_n/H)(K_v)$. In particular, $\xi_U(\alpha)(P)=0 \in \Br K_v$. Since $P$ and $v \nmid p$ were arbitrary, this means that $\xi_U(\alpha)$ is unramified by a well-known consequence of Harari's formal lemma \cite[Thm.\ 2.1.1]{Harari94}.

    The case of a general $k$ follows from the number field case and a ``no-name lemma'' argument. Namely, let $L \coloneqq \Q(\mu_p) \subseteq k$, $\rho: H \hookrightarrow SL_n(k)$ be the representation defining the quotient $SL_{n,k}/H$, and $\rho': H  \hookrightarrow SL_{n'}(L)$ be another faithful representation. Let $U'_k \coloneqq SL_{n',k}/H$, $U'_L \coloneqq SL_{n',L}/H$ (note that we have a natural map $U'_k \to U'_L$), and $U''_k$ be the diagonal quotient $(SL_{n,k} \times SL_{n',k})/H$.
    The variety $U''_k$  is both an $SL_{n,k}$-torsor over $U'_k$ and a $SL_{n',k}$-torsor over $U_k=U$:
    \[
    \begin{tikzcd}
    U_k & U''_k \arrow[l, "{SL_{n',k}}"'] \arrow[r, "{SL_{n,k}}"] & U'_k
    \end{tikzcd}
    \]
    We get a commutative diagram:
    \[
    \begin{tikzcd}
                       & \Lambda^2A^D \arrow[ld, "\xi_{U'_k}" description] \arrow[d, "\xi_{U''_k}" description] \arrow[rd, "\xi_{U_k}"] &                   \\
    \Br U'_k \arrow[r] & \Br U''_k                                                                                                      & \Br U_k \arrow[l]
    \end{tikzcd}
    \]
    where the horizontal maps are pullbacks. Moreover, the morphism $\xi_{U'_k}$ factors as $\Lambda^2 A^D \to[\xi_{U'_L}] \Br U'_L \to \Br U'_k$, where the last morphism is an extension of scalars. We know by the number field case that $\xi_{U'_L}(\alpha) \in {\Br_{ur}U'_L}$, and thus $\xi_{U'_k}(\alpha) \in{\Br_{ur}U'_k}$ and $\xi_{U''_k}(\alpha) \in{\Br_{ur}U''_k}$. Therefore, letting $\beta \coloneqq \xi_{U_k}(\alpha)$, and $\pi$ denote the projection $U''_k \to U_k$, we have that $\pi^* \beta$  belongs to $\Br_{ur} U''_k$. But $U''_k \to U'_k$ is an $SL_{n,k}$-torsor, and in particular possesses a rational section $s: U_k \dashrightarrow U''_k$ by Hilbert 90. It follows that $\beta= (\pi s)^*\beta=s^*(\pi^*\beta)$ is unramified as well, as wished.

\end{proof}

\begin{remark}\label{Rmk:refinement}
    The composition map $\xi_U$ is easily seen to be equal to the composition:
    \[
    \Lambda^2A^D \to[\cup] H^2(A,\Z/p\Z) = H^2(A, \mu_p) \to[\text{inf}] H^2(\Gamma_A, \mu_p) \to H^2(\Gamma_A, \oK^*) \to[\check{C}_{\mathfrak{l}}]  \Br U,
    \]
    thus $\im \xi_U \subseteq \Br_{\lambda}^{ram}U$.
\end{remark}

\subsection{A decomposition of $H^2(A,B)$}\label{SSecSplittingH2}

For general abstract commutative groups $A$ and $B$, we have a split exact sequence \cite[V.6.5]{Brown}:
\begin{equation}\label{Eq:SES}
    0 \to \operatorname{Ext}(A,B) \to H^2(A,B) \to[\omega_B] \Hom(\Lambda^2A,B) \to 0.
\end{equation}

The splitting is non-canonical in general, but it is canonical when $\#B$ is odd: in this case a section $s_B$ of $\omega_B$ is given by $s_B: \gamma \mapsto \left(a_1,a_2 \mapsto \frac{1}{2}\gamma(a_1\wedge a_2)\right)$. If $B=\Z/n\Z$, then $\Hom(\Lambda^2A,\Z/n\Z) = \Lambda^2\Hom(A,\Z/n\Z)$ and $2\cdot s_B$ is the cup-product:
\begin{equation}\label{Eq:sBcupproduct}
    2\cdot s_B= \cup: \Lambda^2\Hom(A,\Z/n\Z) \to H^2(A,\Z/n\Z).
\end{equation}

The splitting $s_B$ of \eqref{Eq:SES} induces a canonical decomposition $H^2(A,B)= \operatorname{Ext}(A,B) \oplus \Hom(\Lambda^2A,B)$.

\begin{lemma}\label{Lem:Hnilpotent}
    Let $p$ be an odd prime number, and assume that $A$ and $B$ have exponent $p$. Let $1 \to B \to H \to[\pi] A \to 1$ be a nilpotent central extension of exponent $p$. Then the class $[H] \in H^2(A,B)$ lies in the image of $s_B$.
\end{lemma}
\begin{proof}
    Equivalently, we have to prove that $[H]$ maps to $0$ under the projection $H^2(A,B) \to {\operatorname{Ext}(A,B)}$. This projection is natural in $A$ (and $B$ as well, but we do not need this). 
    Therefore, since the functor ${\operatorname{Ext}(A,B)}$ is additive in $A$, and every abelian group decomposes into a direct sum of cyclic groups, it suffices to prove the result for a cyclic $A$. If $A$ is cyclic and generated by $a \in A$, every $h \in \pi^{-1}(a)$ must be of order $p$ because $H$ has exponent $p$. The element $h$  thus provides a section $a \mapsto h$ of $\pi$, and $H=B \oplus A$. Hence $[H]=0 \in H^2(A,B)$, and in particular $H$ maps to $0$ in ${\operatorname{Ext}(A,B)}$ as wished.
\end{proof}

\subsection{Formula for the Brauer--Manin pairing}

We work in the setting of \ref{Sec:GBsp}, but we also assume that $k=K$ is a number field. All cohomology and homomorphism groups appearing in this subsection have a natural $\F_p$-vector space structure, so whenever we take tensor products, alternating products, etc, we mean that these operations are performed as $\F_p$-vector spaces.

Let $v$ be a finite place of $K$, $\Gamma_v = \Gal(\oK_v/K_v)$, and $G_v= \Gamma_v^{ab}/p\Gamma_v^{ab}$. Remembering that we fixed an isomorphism $\mu_p \cong \Z/p\Z$, and using the identifications $H^1(\Gamma_v, \F_p)=\Hom(\Gamma_v, \F_p)=\Hom(G_v, \F_p)$, we may identify the perfect local duality pairing:
\[
\cup: H^1(\Gamma_v, \F_p)^{\otimes 2} \to H^2(\Gamma_v, \F_p) \cong \F_p,
\]
with a pairing:
\[
H_v: \Lambda^2 \Hom(G_v,\F_p) \to \F_p.
\]
(Note that this is alternating because $p$ is odd.)

We choose a basis $g_1,\ldots,g_r$ of $G_v$, and a basis $t_1,\ldots,t_a$ of $A$. 
These induce an identification:
\begin{align}
    \label{Id1} \Hom(G_v,A)&= Mat_{a \times r}(\F_p) 
\end{align}
For a finite-dimensional vector space $V/\F_p$, a basis $v_1,\ldots,v_l$ of $V$ induces an identification:
\[
\Lambda^2(V^D) \cong Mat_{ant, l \times l}(\F_p), \ \ \ v_i^D \wedge v_j^D \mapsto e_ie_j^T - e_je_i^T,
\]
where $e_1,\ldots,e_l$ denotes the standard basis of $\F_p^l$, and $Mat_{ant}$ denotes antisymmetric matrices. Specializing to $V=A$ and $V=G_v$, we get identifications 
\begin{align}
    \label{Id2} \Lambda^2A^D&= Mat_{a \times a, ant}(\F_p), \\
     \label{Id3} \Lambda^2G_v^D&= Mat_{r \times r, ant}(\F_p), 
\end{align}


Let $\phi \in \Hom(G_v,A), \beta \in \Lambda^2A^D, \gamma \in \Lambda^2G_v^D$, and let $M_{\phi} \in Mat_{a \times r}(\F_p), M_{\beta} \in Mat_{a \times a, ant}(\F_p), M_{\gamma} \in Mat_{r \times r, ant}(\F_p)$ be their corresponding matrices under the identifications above. Let $\tilde H_v \in Mat_{r \times r, ant}(\F_p)$ be the matrix defined by $(\tilde H_v)_{i,j}=\frac12H_v(g_j \wedge g_i)$.
One easily verifies that:
\begin{equation}\label{Eq:Hvtrace}
    \begin{matrix}
    &\phi^*\beta \in \Lambda^2G_v^D&\text{ corresponds to}&{M_{\phi}}^TM_{\beta} {M_{\phi}} & \in Mat_{r \times r, ant}(\F_p),\\
     \text{ and} &H_v(\gamma)&=&\tr(\tilde{H_v}M_{\gamma}) & \in \F_p.
    \end{matrix} 
\end{equation}
Recall that we have a homomorphism
\[
\xi_U: \Lambda^2\Hom(A,\F_p) \to[\cup] H^2(A, \F_p) \to \Br U.
\]
We may compute the local Brauer pairing between $\im \xi_U$ and $U(K_v)$ as follows:
\begin{lemma}\label{Lem:contopairing}
    Let $\beta \in \Lambda^2\Hom(A,\F_p)$, $b \coloneqq \xi_U(\beta) \in \Br U$, $P \in U(K_v)$, and $\phi \in \Hom(G_v,A)$ be induced by the torsor type $[\lambda|_{P}] \in H^1(\Gamma_v,A) = \Hom(\Gamma_v,A) \cong \Hom(G_v,A)$. Then:
    \[
    (b,P)_{v}=\tr(\tilde{H_v}\tilde{M_{\phi}}^TM_{\beta}M_{\phi}).
    \]
\end{lemma}
\begin{proof}
    This follows by just unravelling the notation. Namely, we have that $(b,P)_v = \inv_v (P^*b)$, and the latter is equal to $H_v(\phi^*\beta)=\inv_v(P^*b)$ by the commutativity of the diagram
    \[
    \begin{tikzcd}
    \xi_U: & \Lambda^2A^D \arrow[r, "\cup"] \arrow[d, "\phi^*"] & {H^2(A,\F_p)} \arrow[r, "\check{C}_{\lambda}"] \arrow[d, "\phi^*"] & {H^2(U,\F_p)} \arrow[d, "{P^*}"] \arrow[r]  & {(\Br U)[p]} \arrow[d, "{P^*}"] \\
    H_v:   & \Lambda^2G_v^D \arrow[r, "\cup"]                   & {H^2(G_v,\F_p)} \arrow[r, "\inf"]                                  & {H^2(\Gamma_v,\F_p)=H^2(K_v,\F_p)} \arrow[r] & {(\Br K_v)[p]\cong \F_p}       .
    \end{tikzcd}
    \]
    The formulas \eqref{Eq:Hvtrace} now give the statement.
\end{proof}

\subsection{Proof of Theorem \ref{Thm:gbsp}}
    Let $\Theta_v \subseteq \Hom(\Gamma_v,A)$ be the set of torsor types to which $\lambda$  specializes, i.e.\ the image of $[\lambda|_{-}]:U(K_v) \to \Hom(\Gamma_v,A)$. 
    
\begin{lemma}
    The set $\Theta_v$ is the inverse image of $0$ under:
    \begin{equation}\label{EqNotlinear}
        \Hom(\Gamma_v,A) \to H^2(\Gamma_v,B), \ \ \xi \mapsto \xi^*([H]),
    \end{equation}
    where $[H] \in H^2(A,B)$ is the class representing $H$.
\end{lemma}

 (Note that this inverse image is not a subspace in general!)

\begin{proof}
    The commutative diagram with exact first row:
    \[
\begin{tikzcd}[column sep=tiny]
SL_n(K_v) \arrow[r] & (SL_n/H)(K_v)=U(K_v) \arrow[r] \arrow[rd, "{[\lambda_{-}]}"] & {H^1(K_v,H)} \arrow[d] \arrow[r] & {H^1(K_v,SL_n)=0} \\
                    &                                                           & {H^1(K_v,A)=\Hom(\Gamma_v,A)}    &                  
\end{tikzcd}
    \]
    shows that $\Theta_v$ is the image of $H^1(K_v,H)=\Hom(\Gamma_v,H) \to \Hom(\Gamma_v,A)$.
    Equivalently, $\Theta_v$ consists of those homomorphisms that lift from $A$ to the central extension $H$.  The statement now follows from the theory of central extensions \cite[Section IV.3]{Brown}.
\end{proof}

\begin{proposition}\label{Prop:example}
    For every $p \geq 5$, number field $K$ with $\mu_p \subseteq K^*$, and place $v$ of $K$ dividing $p$, there exists a constant $c(v) \geq 0$ such that for any $a \geq c(v)$, there exists a metabelian nilpotent $H$ of exponent $p$ with $H^{ab}$ of rank $a$ such that, letting $A=H^{ab}$:
    \begin{enumerate}[label=(\roman*)]
        \item $(\Lambda^2A^D)_{bic}=\Lambda^2A^D$;
        \item there exists $\alpha \in \Lambda^2A^D$ such that the local pairing $(-,\xi_U(\alpha)):U(K_v) \to \Z/p\Z$ attains at least two values.
    \end{enumerate}
\end{proposition}
\begin{proof}
    Let $a$ be a natural number and $A\coloneqq (\Z/p\Z)^a$. Lemma \ref{Lem:setting} of the appendix guarantees the existence of a metabelian nilpotent $H$ of exponent $p$ with $H^{ab}=A$ such that $B=[H,H]$ has rank $b \coloneqq 2a-3$ and $(\Lambda^2A^D)_{bic}=\Lambda^2A^D$.

    Recall that $\Theta_v$ is the image of $[\lambda|_{-}]:U(K_v) \to \Hom(\Gamma_v,A)$. Let $\Xi_v \subseteq \Theta_v$ be the image under $[\lambda|_{-}]$ of the left kernel of $U(K_v) \times \Br_{e,ur} U \to \qz, (P,b) \mapsto \inv_v(P^*b)$. Since $e \in \Xi_v$, $\Xi_v$ is non-empty and to prove point $(ii)$ it suffices to prove that $\Xi_v \neq \Theta_v$. We do so by proving that the cardinality of these two sets have different $p$-adic valuation for $a \geq c(v)$. We start by computing $v_p(\#\Theta_v).$
        
    Recall the identifications
    \[
   \operatorname{Hom}\left(G_v, A\right)=\operatorname{Mat}_{a \times r}\left(\mathbb{F}_p\right),
\Lambda^2 A^D=M a t_{a \times a, a n t}\left(\mathbb{F}_p\right) ,
 \Lambda^2 G_v^D=M a t_{r \times r, a n t}\left(\mathbb{F}_p\right).
    \]
    Identifying $B$ with $\F_p^b$ we get identifications $H^2(A,B)=H^2(A,\F_p)^b$ and $H^2(\Gamma_v,B)=H^2(\Gamma_v,\F_p)^b\xrightarrow[\sim]{\inv_v} \F_p^b$. By Lemma \ref{Lem:Hnilpotent} and \eqref{Eq:sBcupproduct}, there exist $h_1,\ldots,h_b \in \Lambda^2\Hom(A,\F_p)=  Mat_{a \times a,ant}(\F_p)$ such that $[H] = (\mathfrak{h}_1,\ldots,\mathfrak{h}_b) \in H^2(A,\F_p)^b, \ \mathfrak{h}_i \coloneqq\cup(h_i)$, where $\cup$ indicates the cup-product $\Lambda^2\Hom(A,\F_p)=\Lambda^2H^1(A,\F_p) \to H^2(A,\F_p)$. Then by \eqref{EqNotlinear}:
    \begin{equation}
        \Theta_v=\{M \in \Hom(\Gamma_v,\F_p)  \mid \inv_v(M^*\mathfrak{h}_i)=0, \ i=1,\ldots,b\}.
    \end{equation}
    By Lemma \ref{Lem:contopairing}, under the identification $\Hom(\Gamma_v,\F_p)=\Hom(G_v,\F_p)=Mat_{a \times r}\left(\mathbb{F}_p\right)$, $\Theta_v$ corresponds to:
    \begin{equation}\label{Eq:description}
        {\Theta}_v=\{M \in Mat_{a \times r}\left(\mathbb{F}_p\right)  \mid \tr(\tilde{H_v}\tilde{M}^T \tilde{h_i} \tilde{M})=0, \ i=1,\ldots,b\}.
    \end{equation}

    A special case of the Ax-Katz theorem \cite{KA} states that the $p$-adic valuation of the number of solutions of a system of $m$ polynomial equations of degree $\leq d$ in $n$ (affine) variables in $\F_p$ is at least $\left\lceil\frac{n-dm}{d}\right\rceil$.
    Hence, since \eqref{Eq:description} describes $\Theta_v$ as the solution set of $b$ quadratic equations in $ra$ variables:
    \[
    v_p(\#\Theta_v) \geq \left\lceil\frac{ra-2b}{2}\right\rceil = \left\lceil\frac{(r-4)a+6}{2}\right\rceil \geq \left\lceil\frac{2a+6}{2}\right\rceil
    \]
    as $r \geq p+1 \geq 6$ by \cite[Thm.\ 7.5.11]{GermanBook}.

    We now compute $v_p(\# \Xi_v)$, using Lemma \ref{Prop:Isotropic_subspaces} from the appendix. Recall that $(\Lambda^2A^D)_{bic}=\Lambda^2A^D$ by our choice of $H$, and thus $\xi_U(\Lambda^2A^D)=\Br_{ur}U$ by Theorem \ref{ThmBogomolovext}. Hence Lemma \ref{Lem:contopairing} implies that $P$ lies in the left kernel  of the local Brauer pairing $U(K_v) \times \Br_{ur} U \to \qz$ if and only if $H_v(M_P^*\alpha)=(P,\xi_U(\alpha))_v=0$ for all $\alpha \in \Lambda^2A^D$. 
    Therefore, \eqref{Eq:Hvtrace} implies that the image of $\Xi_v$ under the identification $\Hom(\Gamma_v,\F_p)=\Hom(G_v,\F_p)=Mat_{a \times r}\left(\mathbb{F}_p\right)$ is:
    \begin{align*}
        \Xi_v
        &=\{M \in \Theta_v \subseteq Mat_{a \times r}(\F_p)\mid \tr(\tilde{H}_vM^T N M)=0, \text{ for all } N \in Mat_{a \times a,ant}(\F_p)\}  \\
        &=\{M \in Mat_{a \times r}(\F_p)  \mid \tr(\tilde{H}_vM^T N M)=0, \text{ for all } N \in Mat_{a \times a,ant}(\F_p) \},\\
        &=\{M \in Mat_{a \times r}(\F_p)  \mid \tr(M\tilde{H}_vM^T N)=0, \text{ for all } N \in Mat_{a \times a,ant}(\F_p) \},
    \end{align*}
    where the second identity holds because the $b$ quadratic equations describing $\Theta_v$ are redundant in the description of $\Xi_v$ (see \eqref{Eq:description}).  
    Since $\tr(X^TY)$ induces a perfect pairing on $Mat_{a \times a,ant}(\F_p)$ and $M\tilde{H}_vM^T$ lies in $Mat_{a \times a,ant}(\F_p)$, we obtain from the above:
    \[
    \Xi_v^T=\{M \in Mat_{r \times a}(\F_p)  \mid M\tilde{H}_v^TM^T=0\}.
    \]
    The matrix $\tilde{H}_v^T=-\tilde{H}_v$ is invertible by local duality, and so Lemma \ref{Prop:Isotropic_subspaces} shows that there exists a function $C: \N \to \Z$ (depending only on $p$) such that $\# \Xi_v^T\equiv C(r) \mod p^a$ if $a \geq 2r$. Recall that $r=r(v)=[K_v:\Q_p]+2$, and let $c(v)= \max\{2r,v_p(C(r))+1\}.$ If $a \geq c(v)$, then $v_p(\# \Xi_v^T)= v_p(C(r))$ by the ultrametric triangular inequality.
    Now
    \[
    v_p(\# \Xi_v)= v_p(\# \Xi_v^T) = v_p(C(r))<a<\left\lceil\frac{2a+6}{2}\right\rceil\leq v_p(\#\Theta_v)
    \]
    as wished.
\end{proof}

We refer the reader to the proof of Lemma \ref{Prop:Isotropic_subspaces} for a formula for $C(r)$.
As a consequence of Proposition \ref{Prop:example}, we may now prove Theorem \ref{Thm:gbsp}:
\begin{proof}[Proof of Theorem \ref{Thm:gbsp}]
    Let $H$ be any group as in Lemma \ref{Prop:Isotropic_subspaces}, and let $U=SL_{n,K}/H$. By point $(ii)$ of the lemma there is a place $v$, an element $\beta \in \Lambda^2A^D$, and points $P_v \neq Q_v \in U(K_v)$ such that:
    \[
    (P_v,b)_v \neq (Q_v,b)_v \in \qz, \ \ b\coloneqq \xi_U(\beta) \in \Br X.
    \]
    Consider then the adelic point $\underline{P} \in U(\A_K)$ (resp. $\underline Q$) that is equal to $e$ at all places $\neq v$ and is equal to $P_v$ (resp.\ $Q_v$) at $v$. Then $(\underline{P},b)_{BM}=\inv_v(b(P_v)) \neq \inv_v(b(Q_v))=(\underline{Q},b)_{BM}$. Hence (at least) one between $\underline P$ and $\underline Q$ does not lie in $X(\A_K)^{\Br X}$, concluding the proof.
\end{proof}

\appendix

\section{Elementary counting facts}\label{Sec:Elementary}

\begin{lemma}\label{Prop:Too_little_points}
	Let $n \leq N$ be positive integers and $X \subseteq \P^N(\F_p)$ a subset of cardinality $< \#\P^n(\F_p)$. There exists then an $n$-codimensional subspace $L \subseteq \P_{\F_p}^N$ such that $X \cap L =\emptyset$.
\end{lemma}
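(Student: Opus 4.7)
The plan is to prove this by induction on $N$, with $n$ held fixed. I would set the base case at $N = n$, where the hypothesis $|X| < |\P^n(\F_p)| = |\P^N(\F_p)|$ immediately yields a point $x_0 \in \P^N(\F_p) \setminus X$, and the singleton $L = \{x_0\}$ is then a zero-dimensional (and hence codimension-$n$) linear subspace of $\P^N_{\F_p}$ avoiding $X$.

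For the inductive step (so $N > n$, assuming the result holds in $\P^{N-1}_{\F_p}$), the key move would be to project from a point not in $X$. Using $|X| < |\P^n(\F_p)| \leq |\P^N(\F_p)|$, I would pick $p_0 \in \P^N(\F_p) \setminus X$. Then I would fix an arbitrary hyperplane $H_0 \subseteq \P^N_{\F_p}$ with $p_0 \notin H_0$, and let $\pi \colon \P^N_{\F_p} \setminus \{p_0\} \to H_0 \cong \P^{N-1}_{\F_p}$ be the linear projection from $p_0$. Since $|\pi(X)| \leq |X| < |\P^n(\F_p)|$, the inductive hypothesis applied to $\pi(X) \subseteq H_0$ produces a codimension-$n$ linear subspace $L'' \subseteq H_0$ disjoint from $\pi(X)$.

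I would conclude by letting $L$ be the projective linear span of $L'' \cup \{p_0\}$, equivalently the cone $\pi^{-1}(L'') \cup \{p_0\}$ with vertex $p_0$. Choosing coordinates so that $p_0 = [1 : 0 : \cdots : 0]$ and $H_0 = \{x_0 = 0\}$, the linear forms in $x_1, \ldots, x_N$ cutting $L''$ out of $H_0$ also cut $L$ out of $\P^N_{\F_p}$, so $L$ is linear of dimension $\dim L'' + 1 = N - n$, i.e.\ of codimension $n$. The disjointness $L \cap X = \emptyset$ is then immediate: $p_0 \notin X$ by choice, and any other point $x \in L$ satisfies $\pi(x) \in L''$, so $\pi(x) \notin \pi(X)$ and hence $x \notin X$. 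There is no genuinely hard step in this argument, and in particular no averaging or counting is required; the content is just that projection from a point $p_0 \notin X$ puts the codimension-$n$ subspaces of $\P^N_{\F_p}$ through $p_0$ in bijection with the codimension-$n$ subspaces of a hyperplane, which lowers $N$ by one.
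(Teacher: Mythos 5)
Your proof is correct, including the edge cases ($n=0$, $N=n$), but it follows a genuinely different route from the paper's. You induct on the ambient dimension $N$: project from a single point $p_0\notin X$ onto a hyperplane, apply the inductive hypothesis to $\pi(X)$ (using only $\#\pi(X)\le \#X$), and cone the resulting subspace back up through $p_0$. The paper instead gives a one-shot extremal argument: it lets $k$ be the smallest integer such that $X$ meets \emph{every} $k$-dimensional subspace, picks a $(k-1)$-dimensional subspace $L$ disjoint from $X$, and projects from $L$; since every fiber of $\pi_L$ sits inside a $k$-dimensional subspace through $L$, the map $\pi_L|_X$ is \emph{onto} $\P^{N-k}(\F_p)$, giving $\#X\ge \#\P^{N-k}(\F_p)$ and hence $\dim L = k-1\ge N-n$. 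So the two arguments use projection in dual ways: you exploit that projection cannot increase the number of points of $X$, while the paper exploits that projection from a maximal avoiding subspace must be surjective on $X$. Your induction is slightly more constructive (it builds the subspace one dimension at a time and produces one of codimension exactly $n$ directly, whereas the paper's $L$ a priori only has dimension $\ge N-n$ and one should pass to a subspace of it); the paper's version is shorter and avoids setting up an induction. Both are complete and elementary.
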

\begin{proof}
	Let $k\geq 0$ be the smallest integer such that $X$ intersects every $k$-dimensional subspace in $\P^N_{\F_p}$. If $k=0$, there is nothing to prove. Otherwise, let $L \subseteq \P^N_{\F_p}$ be a $(k-1)$-dimensional subspace such that $L \cap X =\emptyset$. Let $\pi_L:\P^N\setminus l \rightarrow \P^{N-k}$ be a projection outside of $L$. We know by assumption that $\pi_L(X(\F_p))=\P^{N-k}(\F_p)$, hence $\#X(\F_p) \geq \#\P^{N-k}(\F_p) \Rightarrow N-k < n$, i.e. $k \geq N-n+1$. Hence the dimension of $L$ is $\geq N-n$ and it is the sought subspace.
\end{proof}

The following lemma is inspired by \cite[Sec.\  5]{BogomolovMumbai}, see also \cite[p.\ 37]{mumbai04}.

\begin{lemma}\label{Lem:setting}
	Let $p\neq 2$ be a prime. For every $\F_p$-vector space $A$ of dimension $4 \leq a < \infty$, there exists an $\F_p$-vector space $B$, of dimension $b=2a-3$, and a (surjective) morphism 
	\begin{equation}\label{star}
	\mathfrak{c}:\Lambda^2A \rightarrow B,
	\end{equation}	
	such that, if $1 \rightarrow B \rightarrow H \to[\pi] A \rightarrow 1$ is the extension whose commutator map is $\mathfrak{c}$, we have that there are no pure wedges $0 \neq a_1 \wedge a_2 \in \Lambda^2A$ lying in the kernel of $\mathfrak c$.
\end{lemma}

\begin{proof}
	Let $X \subseteq \P_{\F_p}(\Lambda^2A)$ be the image of the ``alternating Segre morphism'' 
	$$
	- \wedge-:\P(A) \times \P(A) \setminus \Delta \rightarrow \P(\Lambda^2A), 
	$$
	which is isomorphic to the Grassmanian variety $\Gr_{\F_p}(2,A)$. Since $X(\F_p)$ parametrizes two-dimensional $\F_p$-subspaces of $A$:
	\begin{equation}\label{Eq:number_points_Segre}
	\#X(\F_p)= 	\frac{(p^a-1)(p^{a-1}-1)}{(p^2-1)(p-1)}.
	\end{equation}
	
	It suffices to show that there exists a $(2a-3)$-codimensional subspace $L$ in $\P(\Lambda^2A)$ such that $L \cap X(\F_p) =\emptyset$, and choose $\mathfrak{c}$ such that $\Lambda^2A \supseteq \F_p \cdot L(\F_p)= \Ker \mathfrak{c}$. 
	Noting that:
	\[
	\frac{(p^{a}-1)(p^{a-1}-1)}{(p^2-1)(p-1)}<  \frac{(p^{a}-1)(p^{a-1}-1)}{(p+1)(p-1)} \leq\frac{(p^{2a-2}-1)(p+1)}{(p+1)(p-1)}= \# \P^{2a-3}(\F_p),
	\]
	such a subspace always exists by Lemma \ref{Prop:Too_little_points}.
\end{proof}

\begin{lemma}\label{Prop:Isotropic_subspaces}
	Let $A,V$ be $\F_p$-vector spaces with $p \neq 2$, and let $a := \dim A, r:= \dim V$. Assume that $a \geq 2r$, and that $V$ is endowed with an alternating non-degenerate bilinear form $b:V \times V \to \F_p$. We then have that:
	\[
	\Xi(A,V) \defeq \#\{\xi \in \Hom(A,V) \ \mid \ \xi^*b=0 \} \equiv C(r) \mod p^a,
	\] 
	where $C(r)$ is a non-zero integer depending only on $r$.
\end{lemma}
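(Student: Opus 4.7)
The plan is to stratify by the image. Since $b$ is alternating, a linear map $\xi \colon A \to V$ satisfies $\xi^* b = 0$ if and only if $\mathrm{Im}(\xi)$ is a totally isotropic subspace of $V$. Write $r = 2m$ (a non-degenerate alternating form forces even dimension) and let $N_k(V)$ denote the number of $k$-dimensional totally isotropic subspaces; since non-degenerate alternating forms on an $\F_p$-vector space are unique up to isomorphism in a given dimension, $N_k(V)$ depends only on $r$. The number of surjective linear maps $\F_p^a \twoheadrightarrow \F_p^k$ equals $\prod_{i=0}^{k-1}(p^a - p^i)$ (by duality, this is the number of injective maps $\F_p^k \hookrightarrow \F_p^a$). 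Stratifying yields
\[
\Xi(A,V) \;=\; \sum_{k=0}^m N_k(V) \prod_{i=0}^{k-1}(p^a - p^i).
\]

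Next I would reduce this identity modulo $p^a$. Each factor satisfies $p^a - p^i \equiv -p^i \pmod{p^a}$, so $\prod_{i=0}^{k-1}(p^a - p^i) \equiv (-1)^k p^{\binom{k}{2}} \pmod{p^a}$; moreover, whenever $k > a \ge 1$ one has $\binom{k}{2} \ge a$, so these terms vanish modulo $p^a$ on both sides. Hence
\[
\Xi(A,V) \;\equiv\; C(r) \pmod{p^a}, \qquad C(r) \;:=\; \sum_{k=0}^m N_k(V)\,(-1)^k p^{\binom{k}{2}},
\]
which is an integer depending only on $r$.

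The main remaining obstacle is the nonvanishing of $C(r)$. I would establish the closed form
\[
C(2m) \;=\; (-1)^m p^{m^2},
\]
which is plainly nonzero. Starting from the standard count $N_k(V) = \prod_{j=0}^{k-1}(p^{2m-2j}-1)/(p^{j+1}-1)$ (derived by a flag-extension recursion on isotropic subspaces in the symplectic space) and the factorisation $p^{2i}-1 = (p^i-1)(p^i+1)$, one rewrites $N_k(V) = \binom{m}{k}_p \prod_{i=m-k+1}^m(p^i+1)$; the sum defining $C(2m)$ then collapses via the $q$-binomial identity $\prod_{j=0}^{m-1}(1 + p^j t) = \sum_{k=0}^m p^{\binom{k}{2}} \binom{m}{k}_p t^k$, together with a telescoping of the contributions from the $(p^i+1)$ factors. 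The low-dimensional checks $C(0)=1$, $C(2)=-p$, $C(4)=p^4$, $C(6) = -p^9$ are immediate numerical verifications. For the application to Proposition \ref{Ex:GBsp}, only the nonvanishing of $C(r)$ is required, not the precise value.
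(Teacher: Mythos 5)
Your stratification of $\Xi(A,V)$ by the image of $\xi$ and the subsequent reduction modulo $p^a$ are exactly the paper's argument: the paper writes $\Xi(A,V)=\sum_d I_d M_d$ with $I_d=\prod_{i=0}^{d-1}(p^a-p^i)$ the number of surjections onto a $d$-dimensional space and $M_d$ the number of $d$-dimensional isotropic subspaces, and observes that $I_d$ converges $p$-adically to $(-1)^d p^{d(d-1)/2}$ as $a\to\infty$, which is precisely your congruence. The only genuine divergence is in the non-vanishing of $C(r)$. The paper avoids any closed form: it notes that $C(r)=\sum_{d=0}^{r/2}(-1)^d a(d)$ with the positive terms $a(d)$ strictly increasing (because $(p^{r-d+1}-p^{d-1})/(p^d-1)>1$), so the alternating sum is non-zero by a Leibniz-type grouping. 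You instead assert the exact value $C(2m)=(-1)^m p^{m^2}$. That identity is correct (your checks for $m\le 3$ are right, and the sign $(-1)^{r/2}$ agrees with what the paper's monotonicity argument gives); it is a strictly stronger statement, essentially the Steinberg/Solomon--Tits computation for $Sp_{2m}(\F_p)$. However, your justification --- ``collapses via the $q$-binomial identity together with a telescoping'' --- is a sketch rather than a proof: the factor $\prod_{i=m-k+1}^m(p^i+1)$ depends on $k$, so the $q$-binomial theorem does not apply directly and the telescoping would need to be written out (e.g.\ via an induction on $m$ using $N_k^{(m)}=\frac{p^{2m}-1}{p^k-1}N_{k-1}^{(m-1)}$). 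Since only $C(r)\neq 0$ is needed for Proposition \ref{Ex:GBsp}, the paper's soft monotonicity argument is the cheaper route; if you want to keep the closed form, complete that computation.
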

\begin{proof}
	Let
	\[
	M_d \defeq \#\{\text{isotropic $d$-dimensional subspaces in $V$}\},
	\]
	\[
	I_d \defeq \#\{\text{surjective homomorphisms from $A$ to a $d$-dimensional $\F_p$-vector space}\}.
	\]
	We then have that:
	\[
	\Xi(A,V)=\sum_{d=0}^{\min(a,\frac{r}{2})}I_dM_d
	\]
	(note that the fact that $V$ is endowed with a non-degenerate alternating linear form and $p \neq 2$ implies that $r$ is even).
	One can easily see that:
	\[
	I_d=(p^a-1)\cdot (p^a-p) \cdots (p^a-p^{d-1}), \ \text{for every $d \leq a$},
	\]
	\[
	M_d=\frac{(p^r-1)\cdot (p^{r-1}-p)\cdots (p^{r-d+1}-p^{d-1})}{(p^d-1)\cdot (p^d-p)\cdots (p^d-p^{d-1})}, \ \text{for every $d \leq r/2$}.
	\]
	In particular, $\Xi(A,V)=\Xi'(a,r)$ depends only on $a$ and $r$. Note that, for a fixed $r$, $\Xi'(a,r)$ converges $p$-adically, as $a \to \infty$, to the following sum (which happens to be an integer number):
	\begin{align*}
		C(r) \defeq\Xi'(\infty,r)&\defeq \sum_{d=0}^{r/2}(-1)^{d}\frac{(p^r-1)\cdot (p^{r-1}-p)\cdots (p^{r-d+1}-p^{d-1})}{(p-1)\cdot (p^{2}-1)\cdots (p^d-1)}\\&=\sum_{d=0}^{r/2}(-1)^{d}\binom{r/2}{d}_{p^2} (p+1)\cdots (p^d+1),
	\end{align*}
	where the subscript in the binomial denotes a Gaussian binomial coefficient (an integer number).
	Moreover, $\Xi'(a,r) \equiv \Xi'(\infty,r) \mod p^a$ if $a \geq 2r$.
	Denoting by $a(d)$ the term multiplying the $(-1)^d$ appearing above, we notice that the sequence $a(0) ,\ldots, a(r/2)$ is strictly increasing, as follows by induction from the fact that $\frac{p^{r-d+1}-p^{d-1}}{p^d-1}>1$ for all $d \in \{0,\ldots,r/2\}$. In particular, a standard elementary calculus argument (\`a la Leibniz' rule) shows that $\Xi'(\infty,r) \neq 0$.
\end{proof}

\section{Other works where ramified descent appears}\label{Sec:Other}

Let me mention other works where the idea of ``ramified descent'' has already appeared. One is \cite{HS} by Harpaz and Skorobogatov (successor to Skorobogatov and Swinnerton-Dyer's work \cite{SSD} \cite{SDsolubility}), where the authors use the cyclic ramified covers of some specific Kummer surfaces to prove that, under certain technical assumptions, these satisfy the Hasse principle.

Another work is Corvin and Schlank's paper \cite{CS}, where the authors build upon Poonen's example \cite{PoonenInsufficiency} to show (employing one specific ramified $S_4$-cover) that the following obstruction is \emph{stronger} than \'etale-Brauer-Manin obstruction:
\[
X(\A_K)^{\Br,ram,sol}=\bigcap_{\substack{\psi:Y \to X \\ G-\text{cover} \\ G \text{ solvable}}} \overline{\bigcup_{\xi \in H ^1(K,G)} \psi'_{\xi}(Y_{\xi}^{sm}(\A_K))^{\Br Y_{\xi}^{sm} }},
\]
where the $\psi'_{\xi}$ is the composition $Y_{\xi}^{sm} \to Y_{\xi} \to[\psi_{\xi}] X$.

Lastly, we mention Sections 11.5 and 14.2.5 of Colliot-Th\'el\`ene and Skorobogatov's book \cite{BGbook}, where ramified descent is investigated for $\mu_n$-covers. In particular, in Theorem 14.2.25 of \emph{loc.cit.}, the authors prove a result which translates in our language to saying that, if $\lambda:V \to U$ is a $\mu_n$-torsor  such that there is a divisor on $X$ over which the ``compactification'' $\psi:Y \to X$ of $\lambda$ (notation as in Section \ref{Sec:Setting}) is totally ramified, then $X(\A_K)^{\lambda}=X(\A_K)^{\Br_{\lambda}^{ram}X}$. Their result and our Proposition \ref{Prop2} naturally lead us to the question:

\begin{question}\label{QuestionB1}
    Let $\lambda:V \to U$ and $\psi:Y \to X$ be as in Section \ref{Sec:Setting}. Assume that the cover $Y \to X$ is totally ramified, i.e. $Y$ is geometrically integral and $Y \rightarrow X$ does not have any unramified subcovers. Does one then have that $X\left(\mathbb{A}_K\right)^\lambda=X\left(\mathbb{A}_K\right)^{\mathrm{Br}_\lambda^{ram} X}$?
\end{question}

Note that a positive answer to the question above would guarantee that, for instance, if $Y$ is a variety all of whose $G$-twists satisfy the Hasse principle, then $X$ satisfies the Hasse principle up to Brauer-Manin obstruction.

Let us mention that, when $G$ is supersolvable and $Y$ is rationally connected, 
Harpaz and Wittenberg prove that \cite[Theorem 1.4]{HW20}:
\[
X\left(\A_K\right)^{\mathrm{Br} X }=\overline{\bigcup_{\xi \in H^1(K,G)} \psi^{\sm}_{\xi}\left(Y_{\xi}\left(\A_K\right)^{\mathrm{Br} Y^{\sm}_{\xi}}\right)}
\]
(using our notation). It follows that $X\left(\mathbb{A}_K\right)^\lambda \supset X\left(\mathbb{A}_K\right)^{\mathrm{Br} X}$, i.e.\ Brauer--Manin obstruction is finer than ramified descent obstruction, but it also seems likely that their methods could be in fact used to give a positive answer to Question \ref{QuestionB1} in this case. For instance, when, in addition to the conditions above, $\oK[V]^*/\oK^*=0$ and $\Pic \overline{V}=0$ (e.g.\ if $V=SL_n$), then $\mathrm{Br}_\lambda^{ram} X=\Br X$ by Remark \ref{Rmk:Piceq0}, and a positive answer to the question follows.

\begin{acknowledgement}
	This work was part of the author's thesis.
        The author thanks his PhD advisor David Harari, for providing motivation towards the question, for his guidance and for his helpful comments on the text. 
        Moreover, the author thanks his thesis examiners, Olivier Wittenberg and Daniel Loughran, for their interesting comments. 
        The author especially thanks Olivier Wittenberg for pointing out that the counterexamples of the last section provide an interesting example of transcendental obstruction to the Grunwald Problem, for having provided a simplification of the proof of Theorem 1.1, and for 
        making him notice Theorem \ref{Thm:BrSLnG}. 
        The author thanks the anonymous referees who helped greatly improve the exposition of the paper, and provided valuable insights into simplifications of various proofs, in particular what is now Proposition \ref{Prop2} and the general proof path of Theorem \ref{Thm:gbsp} in Section \ref{Sec6}. 
        Part of this work was written at the Max Planck Institute for Mathematics at Bonn, and the author is grateful for their financial support and for the excellent working conditions they provided.
\end{acknowledgement}

\end{document}